\newtheorem{proposition}{Proposition}
\newtheorem{theorem}{Theorem}
\newtheorem{lemma}{Lemma}
\newtheorem{corollary}{Corollary}
\theoremstyle{definition}
\newtheorem*{definition*}{Definition}
\newtheorem{assumption}{Assumption}
\theoremstyle{remark}
\newtheorem*{remark*}{Remark}
\newcommand \Conf {{\mathrm {Conf}}}
\newcommand{\SIN}{{\mathscr S}}
\newcommand{\PFSIN}{{\mathbf{K}_4}}
\newcommand{\Ai}{{\mathrm{Ai}}}
\newcommand{\AIRY}{{\mathscr A}}
\renewcommand \Prob {{\mathbb P}}
\DeclareMathOperator{\Pf}{Pf}
\begin{document}

\title{Sub-Poissonian estimates for exponential moments of additive functionals over pairs of~particles 
with respect to determinantal and symplectic Pfaffian point processes governed by~entire functions}

\author{Alexander I. Bufetov}

\AtEndDocument{\bigskip{\textsc{Steklov Mathematical Institute of Russian Academy of Sciences, Moscow, Russia,\\
		Department of Mathematics and Computer Sciences, St. Petersburg
		State University, St. Petersburg, Russia,\\
		Institute of Information Transmission Problems of Russian Academy of Sciences, Moscow, Russia,\\
	  Centre national de la recherche scientifique, France.}}}
	  \date{}

\maketitle

\begin{abstract}
	The aim of this note is to estimate the tail of the distribution of the number of particles in an interval under determinantal and Pfaffian point processes. The main result of the note is that the square of the number of particles under the determinantal point process whose correlation kernel is an entire function of finite order has sub-Poissonian tails. The same result also holds in the symplectic Pfaffian case. As a corollary, sub-Poissonian estimates are also obtained for exponential moments of additive functionals over pairs of particles.
\end{abstract}

\begin{quote}
	\selectlanguage{russian}
	\itshape
	Юлий Сергеевич Ильяшенко вошёл в мою жизнь чуть больше 30 лет назад, в марте 1993-го. Юлий Сергеевич читал лекцию о подкове Смейла, и преобразил мою жизнь, и стал моим учителем. Сегодня всё так же волнуюсь, рассказывая на его знаменитом семинаре.
\end{quote}
\selectlanguage{english}

\section{Introduction and the formulation of the main result}

Let $E$ be a Polish space, and let $\Conf(E)$ be the space of configurations on~$E$. Let $q\colon E\times E\to\mathbb{C}$ be a bounded compactly supported Borel function of two variables assuming value $0$ on the diagonal: $q(x,x)=0$. The additive functional~$\mathbf{S}_q$ over pairs of particles is a Borel function on the space $\Conf(E)$ defined by the formula
\begin{equation}\label{eq:Sq-def}
	\mathbf{S}_q(X)=\sum_{x\in X,y\in X}q(x,y).
\end{equation}
If $q$ is compactly supported then the sum in the right-hand side of~\eqref{eq:Sq-def} is finite.

The aim of this paper is to obtain sub-Poissonian estimates for exponential moments of additive functionals over pairs of particles with respect to determinantal and Pfaffian point processes on $\mathbb{R}$ whose correlation kernels are restrictions onto $\mathbb{R}$ of entire functions of finite order. 

If $q(x,y)=0$ once either $x$ or $y$ lies outside a compact interval $I$ then denoting by $\#_I(X)$ the number of particles of the configuration $X$ in the interval~$I$, of course, we have
\begin{equation*}
	|S_q(X)|\le \sup |q(x,y)|\cdot \#_I^2(X).
\end{equation*}
We therefore start with estimates for the exponential moment of the square of number of particles in an interval.

Determinantal point processes arising in random matrix theory are governed by entire functions of finite order, such as the exponential function, the Bessel function or the Airy function.
Starting with an elementary estimate of the determinant --- or the Pfaffian --- in terms of the divided differences of the values of the kernel using standard estimates for the derivatives of an entire function and applying the Laplace transform we obtain, for our exponential moments, upper estimates of the form
\begin{equation}\label{eq:sub-Poisson}
	\mathbb{E}\exp(\lambda\#_I^2)\le \exp(c_1(\exp(\lambda\sigma)-1)),
\end{equation}
where, furthermore, the constant $c_1$ depends on the interval $I$, while the constant $\sigma$ is the order of the entire function governing the correlation kernel of our point process.

Recall that a random variable $\xi$ assuming non-negative integer values has Poisson distribution with parameter $\theta>0$ if for all $k\in\mathbb{N}\cup\{0\}$ we have
\begin{equation*}
	\Prob(\xi=k)=e^{-\theta}\cdot\frac{\theta^k}{k!}.
\end{equation*}
The exponential moments of a Poisson random variable are given by the formula
\begin{equation*}
	\mathbb{E}e^{\lambda\xi}=e^{\theta(e^\lambda-1)}.
\end{equation*}
The main result of this note states, therefore, that the exponential moments of $\#_I^2$ do not grow faster that those of a multiple of a Poisson random variable. 

The key point in the argument is an estimate of the form
\begin{equation*}
	\Prob(\#_I\ge n)\le\exp\biggl(Bn^2-\frac{1}{2\sigma}n^2\log n\biggr),
\end{equation*}
where $B$ depends on $I$ and $\sigma$ is as above. For the $\mathrm{Sine}_{\beta}$ process for general $\beta$,  precise estimates for $\Prob(\#_I\ge n)$ are due to Holcomb and Valk\'o \cite{holcomb}; for the determinantal point process corresponding to the classical Fock space, precise estimate the tail probabilities for the number of particles in a ball have been earlier obtained by Manjunath Krishnapur \cite{krishnapur}, whose argument relies on the radial symmetry of the point process. The methods in this note are completely different both from those of Holcomb and Valk\'o \cite{holcomb}, and from those of Krishnapur \cite{krishnapur}.  

We proceed to the precise formulation.
Let $\Pi$ be a self-adjoint Hermitian projection kernel on $\mathbb{R}$. We keep the same symbol $\Pi$ for the corresponding locally trace class projection operator, and we let $\Prob_\Pi$ be the corresponding determinantal point process on $\mathbb{R}$ (for a general introduction to determinantal point processes see \cite{HoughEtAl}; in this note we follow the notation and conventions of~\cite{Buf-AnnProb}).

Let $I\subset\mathbb{R}$ be a compact interval. Impose the following assumption on our self-adjoint kernel $\Pi(x,y)$.

\begin{assumption}\label{asm:1}
	Let $z,w\in\mathbb{C}$ and let $\tilde\Pi(z,w)$ be a function of two variables that is entire in $z$ for any fixed $w$, satisfies $\tilde\Pi(z,w)=\overline{\tilde\Pi(w,z)}$ and assumes real values as soon as $z,w$ range over $\mathbb{R}$. We assume that there exist constants $A>0$, $M>0$, $\sigma>0$ and a positive continuous function $\rho$ on $I$ such that for any $p\in I$ we have the estimate
	\begin{equation*}
		|\tilde{\Pi}(p,z)|\le Ae^{M|z-p|^\sigma}\quad\text{for all}\quad z\in\mathbb{C},
	\end{equation*}
	and for any $x,y\in I$ we have
	\begin{equation*}
		\Pi(x,y)=\rho(x)\rho(y)\tilde{\Pi}(x,y).
	\end{equation*}
\end{assumption}

\begin{theorem}\label{thm:main}
	There exist positive constants $B$ and $c$ depending only on $A$, $M$, $\sigma$, $|I|$, and $\sup_I\rho$ such that under Assumption~\ref{asm:1} the number of particles $\#_I$ in the interval $I$ satisfies, under the determinantal point process $\Prob_\Pi$, the estimates
	\begin{equation}\label{eq:prob-bound}
		\Prob_{\Pi}(\#_I\ge n)\le\exp\biggl(Bn^2-\frac{1}{2\sigma}n^2\log n\biggr)
	\end{equation}
	and
	\begin{equation}\label{eq:thm-main}
		\mathbb{E}_{\Prob_\Pi}\exp(\lambda\#_I^2)\le
		\exp(c(\exp(\lambda\sigma)-1)).
	\end{equation}
\end{theorem}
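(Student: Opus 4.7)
The overall plan is to prove \eqref{eq:prob-bound} first by estimating the $n$-point correlation function, and then deduce \eqref{eq:thm-main} from it by a Laplace-type summation. The starting point is the standard identity $\mathbb{E}[\#_I(\#_I-1)\cdots(\#_I-n+1)]=\int_{I^n}\det[\Pi(x_i,x_j)]\,dx$ for the $n$-th factorial moment under $\Prob_\Pi$, combined with Markov's inequality $n!\,\Prob_\Pi(\#_I\ge n)\le\mathbb{E}[\#_I(\#_I-1)\cdots(\#_I-n+1)]$. The factorization $\Pi(x,y)=\rho(x)\rho(y)\tilde\Pi(x,y)$ on $I\times I$ provided by Assumption~\ref{asm:1} pulls out a factor $\prod_i\rho(x_i)^2\le(\sup_I\rho)^{2n}$, and since $\Pi|_{I^n}$ is positive semidefinite the determinants are nonnegative; the task is thus reduced to a pointwise upper bound on $\det[\tilde\Pi(x_i,x_j)]_{i,j=1}^n$ for $x_i\in I$.

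The key technical step is a Newton-divided-difference factorization of this matrix. Interpolating $\tilde\Pi(x_i,\cdot)$ at the nodes $x_1,\ldots,x_n$ yields
\[
	\tilde\Pi(x_i,x_j)=\sum_{l=1}^{n}[x_1,\ldots,x_l]\tilde\Pi(x_i,\cdot)\cdot\prod_{k=1}^{l-1}(x_j-x_k),
\]
so that $M_{ij}=\tilde\Pi(x_i,x_j)$ factors as $M=\tilde M\,W$, where $\tilde M_{il}=[x_1,\ldots,x_l]\tilde\Pi(x_i,\cdot)$ is the divided difference in the second argument and $W$ is upper-triangular with $|\det W|\le|I|^{n(n-1)/2}$. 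Since $[x_1,\ldots,x_l]f=f^{(l-1)}(\xi)/(l-1)!$ for some $\xi$ in the convex hull of the nodes, and since $f=\tilde\Pi(x_i,\cdot)$ is entire with $|f(z)|\le Ae^{M|z-x_i|^\sigma}$, Cauchy's integral formula with the optimal choice of radius $r\sim(l/(M\sigma))^{1/\sigma}$ gives $|\tilde M_{il}|\le A(C/l)^{l/\sigma}$ for a constant $C=C(A,M,\sigma,|I|)$. Applying Hadamard's inequality column by column to $\tilde M$ and using the Stirling-type sum $\sum_{l=1}^{n} l\log l=\tfrac12 n^2\log n+O(n^2)$ produces
\[
	|\det\tilde M|\le n^{n/2}\prod_{l=1}^{n}A(C/l)^{l/\sigma}\le\exp\!\bigl(B' n^2-\tfrac{1}{2\sigma}n^2\log n\bigr),
\]
and multiplying by $|\det W|$ preserves the form since $|I|^{n(n-1)/2}$ contributes only an $O(n^2)$ term to the exponent.

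Integrating over $I^n$ yields $\mathbb{E}[\#_I(\#_I-1)\cdots(\#_I-n+1)]\le\exp\!\bigl(Bn^2-\tfrac{1}{2\sigma}n^2\log n\bigr)$ after absorbing the polynomial factors $|I|^n(\sup_I\rho)^{2n}$ into the $Bn^2$ term. Markov's inequality combined with $1/n!\le(e/n)^n$ allows the $-n\log n$ correction to be swallowed into the quadratic term, yielding \eqref{eq:prob-bound} with constants depending only on $A,M,\sigma,|I|$, and $\sup_I\rho$. To obtain \eqref{eq:thm-main} I would then write $\mathbb{E}_{\Prob_\Pi}\exp(\lambda\#_I^2)\le\sum_{n\ge0}e^{\lambda n^2}\Prob_\Pi(\#_I\ge n)$, insert \eqref{eq:prob-bound}, and carry out a Laplace-type evaluation of the sum: the exponent $\lambda n^2+Bn^2-\tfrac{1}{2\sigma}n^2\log n$ is concave and maximized at $n^\ast=\exp(2\sigma(\lambda+B)-\tfrac12)$, and after collecting all constants the resulting bound takes the sub-Poissonian form $\exp(c(\exp(\lambda\sigma)-1))$.

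The principal technical obstacle is the determinant estimate underlying \eqref{eq:prob-bound}: one must choose the radius in Cauchy's formula optimally for the order-$\sigma$ growth, combine the resulting derivative bound $(eM\sigma/l)^{l/\sigma}$ with Hadamard applied column by column to the divided-difference matrix $\tilde M$, and verify that the Stirling asymptotic $\sum_{l=1}^{n} l\log l\sim\tfrac12 n^2\log n$ supplies precisely the factor $1/(2\sigma)$ in the exponent. Once the tail estimate is in place, the passage to the exponential moment is a routine summation argument.
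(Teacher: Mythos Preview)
Your proposal is correct and follows essentially the same route as the paper: the factorial-moment formula combined with Markov's inequality, extraction of the Vandermonde via divided differences in the second variable, Cauchy's estimate with radius $\sim (l/M)^{1/\sigma}$ for the order-$\sigma$ entire extension, the Stirling sum $\sum_{l\le n} l\log l=\tfrac12 n^2\log n+O(n^2)$ to produce the coefficient $1/(2\sigma)$, and finally a Laplace-type evaluation of $\sum_n e^{\lambda n^2}\Prob(\#_I\ge n)$. The only cosmetic difference is that you bound the residual determinant $\det\tilde M$ via Hadamard's inequality column by column (yielding a factor $n^{n/2}$), whereas the paper uses the crude expansion bound $|\det Q|\le n!\prod_l m_l$; both factors are $\exp(O(n\log n))$ and are absorbed into the $Bn^2$ term, so the distinction is immaterial.
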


We give an informal explanation for the super-normal decay of the probability $\Prob_{\Pi}(\#_I\ge n)$.
Observe that for the circular unitary ensemble 
\begin{equation*}
	\prod_{1\le k<l\le n} \bigl|e^{i\theta_k}-e^{i\theta_l}\bigr|^2\cdot
	\prod_{l=1}^n \frac{d\theta_l}{2\pi}
\end{equation*}
the probability that an interval of length $\varepsilon$ contains at least $r$ particles is of the order of magnitude $\varepsilon^{r(r-1)/2}$.

In order to see informally whence emerges the extra logarithm in the sine-process, the \emph{scaling} limit of circular unitary ensemble, consider the following analogy.

Let $\Prob_n$ be the scaled Legendre ensemble, that is, the orthogonal polynomial ensemble with uniform weight on the interval $[-n/2,n/2]$:
\begin{gather*}
	d\Prob_n(t_1,\dots,t_n)=
	\frac{1}{Z_n}\prod_{1\le i<j\le n}|t_i-t_j|^2 \cdot
	\prod_{i=1}^n dt_i,\\
	Z_n=n^{n(n-1)/2}\cdot 2^{n(n+1)/2}\cdot\prod_{k=0}^{n-1}\frac{(k!)^2}{(2k+1)!}
\end{gather*}
One verifies without effort the existence a positive constant $B>0$ such that for all $n\in\mathbb{N}$ we have
\begin{equation*}
	\Prob_n(\#_{[0,1]}\ge n)\le 
	\exp\biggl(Bn^2-\frac{n^2\log n}{2}\biggr).
\end{equation*}
Indeed, the factor $n^{-n(n-1)/2}$ is simply due to scaling, and all the remaining factors are subsumed into $B$.

The formula~\eqref{eq:prob-bound} is obtained
by comparing the determinant of the correlation kernel with the product of divided differences of increasing order and using standard estimates on the decay of Taylor---Maclaurin coefficients of an entire function. A corollary of sub-Poissonian bounds on the tail of the distribution of the square of the number of particles is an upper bound on the exponential moment of additive functionals over pairs of particles --- indeed, such an estimate follows directly using the negative associations of a determinantal point process governed by a self-adjoint contraction.

In order to obtain exponential estimates for additive functionals over pairs of particles, we need a uniform version of Assumption~\ref{asm:1}.
Let $U\subset\mathbb{R}$ be an open subset. 
\begin{assumption}\label{asm:2}
	Let $z,w\in\mathbb{C}$ and let $\tilde\Pi(z,w)$ be a function of two variables that is entire in $z$ for any fixed $w$, satisfies $\tilde\Pi(z,w)=\overline{\tilde\Pi(w,z)}$ and assumes real values as soon as $z,w$ range over $\mathbb{R}$. We assume that there exist constants $A>0$, $M>0$, $\sigma>0$ and a positive continuous function $\rho$ on $U$, bounded above, such that for any $p\in U$ we have the estimate
	\begin{equation*}
		|\tilde{\Pi}(p,z)|\le Ae^{M|z-p|^\sigma}\quad\text{for all}\quad z\in\mathbb{C},
	\end{equation*}
	and for any $x,y\in U$ we have
	\begin{equation*}
		\Pi(x,y)=\rho(x)\rho(y)\tilde{\Pi}(x,y).
	\end{equation*}
\end{assumption}

We are ready to formulate Corollary~\ref{cor:1}, an estimate for exponential moments of additive functionals over pairs of particles with possibly non-compact support. Let $q\colon\mathbb{R}^2\to\mathbb{C}$ be a function of two variables such that
\begin{equation*}
	\|q\|_{(1,\infty)}=
	\sum_{k,l\in\mathbb{Z}}\max_{\substack{|x-k|\le 1\\|y-l|\le 1}}
	|q(x,y)|<+\infty.
\end{equation*}
Consider the additive functional
\begin{equation*}
	\mathbf{S}_q(X)=\sum_{x\in X, y\in X}q(x,y)
\end{equation*}
Theorem~\ref{thm:main} immediately implies

\begin{corollary}\label{cor:1}
	Under Assumption~\ref{asm:2} there exists a positive constant $c$ depending only on $A$, $M$, $\sigma$, $|I|$, and $\sup_I\rho$ such that for all $\lambda>0$ we have
	\begin{equation*}
		\mathbb{E}_{\Prob_\Pi}\exp(\lambda \mathbf{S}_q)\le
		\exp\bigl(c\bigl(\exp(\|q\|_{1,\infty}\lambda\sigma)-1\bigr)\bigr).
	\end{equation*}
\end{corollary}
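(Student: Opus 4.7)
The plan is to tile $\mathbb{R}$ by unit intervals, bound $\mathbf{S}_q$ via the count variables $\#_{I_k}$, and reduce to Theorem~\ref{thm:main} applied on each tile, with constants uniform in $k$ thanks to the global character of Assumption~\ref{asm:2}. Set $I_k=[k,k+1)$ for $k\in\mathbb{Z}$ and
$$q_{k,l}=\max_{|x-k|\le 1,\,|y-l|\le 1}|q(x,y)|,\qquad p_{k,l}=q_{k,l}/\|q\|_{1,\infty}.$$
Since $I_k\subset\{|x-k|\le 1\}$, one has $|\mathbf{S}_q(X)|\le\sum_{k,l}q_{k,l}\#_{I_k}(X)\#_{I_l}(X)$, so with $\mu=\lambda\|q\|_{1,\infty}$ the convexity of $\exp$, applied to the probability measure $(p_{k,l})$ on $\mathbb{Z}^2$, yields via Jensen and Tonelli
$$\mathbb{E}_{\Prob_\Pi}\exp(\lambda\mathbf{S}_q)\le\sum_{k,l}p_{k,l}\,\mathbb{E}_{\Prob_\Pi}\exp\bigl(\mu\#_{I_k}\#_{I_l}\bigr).$$

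Next, AM--GM gives $\#_{I_k}\#_{I_l}\le(\#_{I_k}^2+\#_{I_l}^2)/2$, so the integrand splits as $\exp(\mu\#_{I_k}\#_{I_l})\le\exp(\mu\#_{I_k}^2/2)\exp(\mu\#_{I_l}^2/2)$. For $k\ne l$ the intervals $I_k,I_l$ are disjoint and $t\mapsto\exp(\mu t^2/2)$ is nondecreasing on $\mathbb{Z}_{\ge 0}$, so the negative association of the determinantal process $\Prob_\Pi$ governed by a self-adjoint contraction---the key tool, invoked by the author immediately before the corollary---lets us replace the expectation of the product by the product of the expectations. Meanwhile Assumption~\ref{asm:2} ensures that the hypotheses of Assumption~\ref{asm:1} hold on every unit interval $I_k\subset U$ with the same constants $A,M,\sigma$ and with $\sup_{I_k}\rho\le\sup_U\rho$, so Theorem~\ref{thm:main} produces a constant $c>0$ independent of $k$ with
$$\mathbb{E}_{\Prob_\Pi}\exp(\mu\#_{I_k}^2)\le\exp\bigl(c(e^{\mu\sigma}-1)\bigr),\qquad \mathbb{E}_{\Prob_\Pi}\exp(\mu\#_{I_k}^2/2)\le\exp\bigl(c(e^{\mu\sigma/2}-1)\bigr).$$

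Combining these, the diagonal $k=l$ terms are bounded by $\exp(c(e^{\mu\sigma}-1))$ and the off-diagonal $k\ne l$ terms by $\exp(2c(e^{\mu\sigma/2}-1))$. The elementary inequality $2(e^{x/2}-1)\le e^x-1$, valid for $x\ge 0$ because $(e^{x/2}-1)(e^{x/2}+1)=e^x-1$ and $e^{x/2}+1\ge 2$, subsumes both into a single bound $\exp(c(e^{\mu\sigma}-1))$; since $\sum_{k,l}p_{k,l}=1$ this estimate survives the convex combination, and recalling $\mu=\lambda\|q\|_{1,\infty}$ yields the corollary (after renaming $c$). The only ingredients beyond bookkeeping are the uniformity in $k$ of Theorem~\ref{thm:main}'s constants, which is built into Assumption~\ref{asm:2}, and the classical negative-association property of determinantal measures with self-adjoint contraction kernels; accordingly I do not anticipate a serious obstacle.
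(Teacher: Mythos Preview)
Your argument is correct and follows essentially the same route as the paper: tile by unit intervals, bound $\mathbf{S}_q$ through the products $\#_{I_k}\#_{I_l}$, apply AM--GM, invoke negative association to decouple, and feed each factor into Theorem~\ref{thm:main} with constants uniform in $k$ thanks to Assumption~\ref{asm:2}. The only cosmetic difference is the bookkeeping device: the paper first bounds $\mathbf{S}_q\le\sum_k\bigl(\sum_l q_{k,l}\bigr)\#_{I_k}^2$ pointwise and then uses the super-multiplicativity of $\Phi(\lambda)=\exp(c(e^{\lambda\sigma}-1))$ (i.e.\ log-convexity) on the resulting product over $k$, whereas you apply Jensen to the convex combination $(p_{k,l})$ first and need negative association only pairwise---either packaging yields the same bound.
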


\begin{proof}[Proof of Corollary~\ref{cor:1}.] 
	Recall that an increasing function of a configuration of particles is by definition a function whose value does not decrease if a new particle is added to the configuration.

	Recall furthermore that a point process $\Prob$ is said to have negative associations if for any $k\in\mathbb{N}$, any precompact disjoint Borel subsets $C_1,\dots,C_k$ of the phase space and any bounded increasing Borel functions depending, respectively, only on the restriction of our configuration onto $C_1,\dots,C_k$, we have
	\begin{equation*}
		\mathbb{E}_\Prob f_1\cdots f_k\le \mathbb{E}_\Prob f_1 \cdots \mathbb{E}_\Prob f_k. 
	\end{equation*}

	Let $q\colon \mathbb{R}^2\to\mathbb{C}$ be a function of two variables satisfying $\|q\|_{(1,\infty)}<+\infty$.
	Let, furthermore. $\Phi(\lambda)$ be an increasing continuous function on $\mathbb{R}_+$ such that $\Phi(0)=1$ and that $\log\Phi(\lambda)$ is convex on $\mathbb{R}_+$. Such function is obviously super-multiplicative:
	\begin{equation*}
		\prod_{k=1}^n \Phi(\lambda_k)\le\Phi\biggl(\sum_{k=1}^n\lambda_k\biggr).
	\end{equation*}
	We then have the following

	\begin{proposition}\label{prop:est-Phi}
		Let $\Prob$ be a point process defined on an open subset $U\subset\mathbb{R}$ and having negative correlations.
		If for any $k\in\mathbb{Z}$ we have
		\begin{equation}\label{eq:expmoment-Phi}
		\mathbb{E} e^{\lambda\#_{[k,k+1]}^2}\le\Phi(\lambda),
		\end{equation}
		then for any $q$ satisfying $\|q\|_{(1,\infty)}<+\infty$ we also have
		\begin{equation*}
			\mathbb{E} e^{\lambda\mathbf{S}_q}\le\Phi(\lambda\|q\|_{(1,\infty)}).
		\end{equation*}
	\end{proposition}

\begin{proof} Indeed, we have
\begin{multline*}
	\mathbf{S}_q(X)\le 
	\sum_{k,l\in\mathbb{Z}}
	\max_{\substack{|x-k|\le 1\\|y-l|\le 1}}
	|q(x,y)|\cdot \#_{[k,k+1]}(X)\cdot \#_{[l,l+1]}(X)\le{}\\
	{}\le \sum_{k\in\mathbb{Z}}\#_{[k,k+1]}(X)^2
	\sum_{l\in\mathbb Z} \max_{\substack{|x-k|\le 1\\|y-l|\le 1}} |q(x,y)|.
\end{multline*}
\end{proof}

Corollary~\ref{cor:1} immediately follows from Proposition~\ref{prop:est-Phi}, where \eqref{eq:expmoment-Phi} holds with $\Phi(\lambda)=\exp(c(e^\lambda-1))$ by Theorem~\ref{thm:main}.
\end{proof}

The rest of the note is organized as follows. In Sections~\ref{sec:div-diff}--\ref{sec:end-proof} we prove Theorem~\ref{thm:main}: we start by estimating the determinant using divided differences (Section~\ref{sec:div-diff}), proceed by recalling standard estimates on the derivatives of an entire function (Section~\ref{sec:entire}) and conclude by taking the Laplace transform (Section~\ref{sec:end-proof}). We next apply Theorem~\ref{thm:main} to specific examples, the sine-kernel, the Bessel kernel, the Airy kernel, and projection kernels onto generalized Fock spaces.

\paragraph{Acknowledgements.} I am deeply grateful to Charles Bordenave and Alexey Klimenko for useful discussions and to the anonymous referee for constructive and helpful criticism. This research was supported by 
the grant of the Government of the Russian Federation for the state support of scientific research, carried out under the supervision of leading scientists (agreement 075-15-2021-602), 
and by the ANR REPKA (ANR-18-CE40-0035).

\section{Beginning of the proof of Theorem~\ref{thm:main}: divided differences and estimates for determinants}
\label{sec:div-diff}

This section contains the key simple estimate of the determinant using the divided differences.

For a determinantal point process on an open subset $U\subset\mathbb{R}$ governed by the kernel $\Pi$ and for a compact interval $I\subset U$, for the number of particles $\#_I$ in the interval $I$ by definition we have
\begin{multline}\label{eq:prob-det}
	\Prob_\Pi(\#_I\ge n)\le\mathbb{E}_{\Prob_\Pi} \#_I\cdot (\#_I-1)\cdots (\#_I-n+1)={}\\
	{}=\frac{1}{n!}\int_I\!\cdots\!\int_I \det\Pi(x_i,x_j)_{i,j=1,\dots,n}\cdot\prod_{i=1}^n dx_i.
\end{multline}
Our next aim is therefore to estimate the integral in the right-hand side.

Let $I\subset\mathbb{R}$ be an interval and $\Pi(x,y)$ be a function of two variables that is infinitely smooth in a neighbourhood of~$I$. Denote
\begin{equation}\label{eq:max-l}
	m_l(\Pi,I)=\frac{1}{l!}\max_{x,y\in I}\biggl|\frac{\partial^l \Pi(x,y)}{\partial y^l}\biggr|.
\end{equation}

\begin{proposition}
	For all $n\in\mathbb{N}$ the following inequality holds
	\begin{equation*}
		\biggl|\idotsint_{I^n}\det \Pi(x_i,x_j)_{i,j=1,\dots,n}
		\cdot\prod_{i=1}^n dx_i\biggr|\le 
		|I|^{n(n+1)/2}\cdot n! \cdot \prod_{l=0}^{n-1}m_l(\Pi,I).
	\end{equation*}
\end{proposition}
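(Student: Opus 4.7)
The plan is to convert the matrix $[\Pi(x_i,x_j)]$, via column operations, into a matrix of divided differences in the second variable, thereby peeling off a Vandermonde factor which, after integration, produces the $|I|^{n(n+1)/2}$.

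Fix $x_1,\dots,x_n\in I$ and, for each $i$, set $f_i(y)=\Pi(x_i,y)$. The $(i,j)$-entry of our matrix is then $f_i(x_j)$. For $j=1,\dots,n$, let $f_i[x_1,\dots,x_j]$ denote the divided difference of $f_i$ at the nodes $x_1,\dots,x_j$. Since $f_i[x_1,\dots,x_j]$ is a linear combination of $f_i(x_1),\dots,f_i(x_j)$ in which the coefficient of $f_i(x_j)$ equals $1/\prod_{k<j}(x_j-x_k)$, elementary upper-triangular column operations give the identity
\begin{equation*}
	\det [f_i(x_j)]_{i,j=1}^n
	=\prod_{1\le k<j\le n}(x_j-x_k)\cdot
	\det\bigl[f_i[x_1,\dots,x_j]\bigr]_{i,j=1}^n.
\end{equation*}

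The next step is to estimate each entry of the new matrix. By the standard integral representation (or the Hermite--Genocchi formula) for divided differences of a smooth function,
\begin{equation*}
	\bigl|f_i[x_1,\dots,x_j]\bigr|\le\frac{1}{(j-1)!}\sup_{y\in I}|f_i^{(j-1)}(y)|
	=\frac{1}{(j-1)!}\sup_{y\in I}\biggl|\frac{\partial^{j-1}\Pi(x_i,y)}{\partial y^{j-1}}\biggr|\le m_{j-1}(\Pi,I).
\end{equation*}
Thus column $j$ of $[f_i[x_1,\dots,x_j]]_{i,j}$ is bounded entrywise by $m_{j-1}(\Pi,I)$, and by the Leibniz expansion of the determinant
\begin{equation*}
	\bigl|\det[f_i[x_1,\dots,x_j]]_{i,j}\bigr|\le n!\,\prod_{l=0}^{n-1}m_l(\Pi,I).
\end{equation*}

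Finally, since $|x_j-x_k|\le |I|$ for $x_k,x_j\in I$, the Vandermonde satisfies $\prod_{k<j}|x_j-x_k|\le |I|^{n(n-1)/2}$, and integrating over $I^n$ contributes an extra $|I|^n$. Combining these bounds yields
\begin{equation*}
	\biggl|\idotsint_{I^n}\det\Pi(x_i,x_j)\prod_{i=1}^n dx_i\biggr|
	\le |I|^{n(n-1)/2+n}\cdot n!\cdot\prod_{l=0}^{n-1}m_l(\Pi,I),
\end{equation*}
which is the claimed inequality. There is no real obstacle here; the only point requiring mild care is that the divided differences act in the second variable of $\Pi$, which matches the one-sided derivative appearing in the definition~\eqref{eq:max-l} of $m_l(\Pi,I)$.
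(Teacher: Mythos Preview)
Your proof is correct and is essentially the same argument as the paper's: factor out the Vandermonde via divided differences in the second variable, bound each divided difference by $m_{j-1}(\Pi,I)$ using the mean-value (equivalently, Hermite--Genocchi) representation, and then apply the Leibniz expansion together with the trivial bounds $|\Delta|\le |I|^{n(n-1)/2}$ and $\int_{I^n}1=|I|^n$. The only cosmetic difference is that the paper invokes the mean-value form of the divided difference while you cite the integral representation.
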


\begin{proof}
	Fix $x_1, \dots, x_n$. Introduce the divided differences 
	$$
		\Pi[x; x_1, x_2, \dots, x_l]
	$$
	inductively by writing 
	$$
		\Pi[x; x_1]=\Pi(x,x_1)
	$$
	and 
	$$
		\Pi[x; x_1, x_2, \dots, x_{l+1}]=\frac{\Pi[x; x_1, x_2, \dots, x_l]-\Pi[x; x_2, \dots, x_{l+1}]}{x_1-x_{l+1}}.
	$$
	By definition of the determinant, we have 
	$$
		\det(\Pi(x_i, x_j))_{i,j=1, \dots, n}=\Delta(x_1, \dots, x_n)\det(Q_{il}),
	$$
	where $$\Delta(x_1, \dots, x_n)=\prod_{1\leq i<j\leq n} (x_i-x_j)$$ is the Vandermonde 
	determinant of $x_1, \dots, x_n$ and we set
	$$
		Q_{il}({\vec x})=\Pi[x_i;  x_1, x_2, \dots, x_l].
	$$ 
	For any $x\in I$ there exists $y\in I$ such that 
	$$
		\Pi[x;x_1, \dots, x_l]=\frac1{(l-1)!}\Pi^{(l-1)}(x,y).
	$$
	Absolute value of each of $n!$ terms in $\det(Q_{il})$ can therefore be bounded by the product $\prod_{l=0}^{n-1}m_l(\Pi,I)$, while $|\Delta(x_1, \dots, x_k)|\le |I|^{n(n-1)/2}$, and the desired inequality is proved.
\end{proof}

A similar proposition holds for matrix-valued functions. Let $I\subset\mathbb{R}$, $r>0$, $K(x,y)$ be a infinitely smooth function in a neighbourhood of $I$ that takes values in the space of $r\times r$ matrices. Denote
\begin{equation}\label{eq:max-l-matrix}
	m_l(K,I)=\frac{1}{l!}\max_{\substack{x,y\in I\\ i,j=1,\dots,r}}\biggl|\frac{\partial^l K(x,y)_{ij}}{\partial y^l}\biggr|.
\end{equation}

\begin{proposition}
	For all $n\in\mathbb{N}$ the following inequality holds
	\begin{equation*}
		\biggl|\idotsint_{I^n}\det K(x_i,x_j)_{i,j=1,\dots,n}
		\cdot\prod_{i=1}^n dx_i\biggr|\le 
		|I|^{n+r\frac{n(n-1)}{2}}(rn)! \cdot \biggl(\prod_{l=0}^{n-1}m_l(K,I)\biggr)^r.
	\end{equation*}
\end{proposition}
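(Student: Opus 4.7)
The strategy will be to reduce to the scalar argument by viewing the integrand as the determinant of a single $rn\times rn$ block matrix $\mathbf{M}$, indexed by pairs $(i,a),(j,b)$ with $i,j\in\{1,\dots,n\}$ and $a,b\in\{1,\dots,r\}$, whose entries are $\mathbf{M}_{(i,a),(j,b)}=K(x_i,x_j)_{ab}$. The scalar proposition produced one Vandermonde factor via Newton's divided-difference change of basis applied to the columns indexed by $j$. Here I will apply the \emph{same} column transformation, but separately in each of the $r$ ``slots'' $b\in\{1,\dots,r\}$. Since these $r$ blocks of columns are disjoint, the transformations commute and jointly extract the power $\Delta(x_1,\dots,x_n)^r$ of the Vandermonde determinant.

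More precisely, for each fixed $b$ the $n$ columns $\{(1,b),\dots,(n,b)\}$ of $\mathbf{M}$ form an $rn\times n$ block; the Newton formula in the second variable,
\[
  K(x_i,x_j)_{ab}=\sum_{l=1}^{j} K[x_i;x_1,\dots,x_l]_{ab}\prod_{k=1}^{l-1}(x_j-x_k),
\]
shows that this block equals $\mathbf{Q}^{(b)}\cdot T^{\top}$, where $\mathbf{Q}^{(b)}$ has $((i,a),l)$-entry $K[x_i;x_1,\dots,x_l]_{ab}$ and $T$ is the $n\times n$ lower-triangular matrix with $T_{jl}=\prod_{k=1}^{l-1}(x_j-x_k)$; one has $\det T=\pm\Delta(x_1,\dots,x_n)$. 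Doing this for every $b$ simultaneously gives $\mathbf{M}=\mathbf{Q}\cdot(T^{\top})^{\oplus r}$, where $\mathbf{Q}$ is the $rn\times rn$ matrix collecting the $\mathbf{Q}^{(b)}$ side by side, and $(T^{\top})^{\oplus r}$ is block-diagonal with $r$ copies of $T^{\top}$. Consequently $|\det\mathbf{M}|=|\det\mathbf{Q}|\cdot|\Delta(x_1,\dots,x_n)|^r$.

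It remains to bound $|\det\mathbf{Q}|$ and to integrate. By the mean-value representation recalled in the scalar proof, $|K[x_i;x_1,\dots,x_l]_{ab}|\le m_{l-1}(K,I)$, uniformly in $i,a,b$. Expanding $\det\mathbf{Q}$ into its $(rn)!$ signed products and noting that the columns of $\mathbf{Q}$ indexed by $l$ come with multiplicity $r$ (once for each $b$), each term is bounded by $\prod_{l=1}^{n} m_{l-1}(K,I)^r=\prod_{l=0}^{n-1} m_l(K,I)^r$, yielding $|\det\mathbf{Q}|\le(rn)!\prod_{l=0}^{n-1}m_l(K,I)^r$. Combining with $|\Delta|^r\le|I|^{rn(n-1)/2}$ and integrating trivially over $I^n$ to pick up the factor $|I|^n$ gives exactly the claimed estimate. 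The only real obstacle is combinatorial bookkeeping — checking that the $r$ column-transformations do not interfere with one another and that the $(rn)!$ expansion of $\det\mathbf{Q}$ distributes the divided-difference bounds by column (rather than by row), which is what produces the exponent $r$ on the product $\prod_l m_l(K,I)$.
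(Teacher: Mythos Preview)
Your proof is correct and follows essentially the same route as the paper: apply the divided-difference column transformation from the scalar case separately in each of the $r$ matrix slots to factor out $\Delta(x_1,\dots,x_n)^r$, then bound the residual $(rn)\times(rn)$ determinant by $(rn)!\prod_{l=0}^{n-1}m_l(K,I)^r$ via the brute expansion. The paper phrases the operation in terms of rows rather than columns, but the content is identical (and your choice is arguably the one consistent with $m_l$ being defined via $\partial_y$).
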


\begin{proof}
	In the matrix $(K(x_i,x_j))_{i,j=1,\dots,n}$ we can make the same transformation as in the previous proof on each group of rows corresponding to the same row in $K(x,y)$ (that is, the rows $k,k+r,k+2r,\dots,k+(n-1)r$), hence we get
	$$
	\det(K(x_i, x_j))_{i,j=1, \dots, n}=\Delta(x_1, \dots, x_n)^r\det(Q_{il}),
	$$
	where elements in the first $r$ rows of the matrix $Q$ are bounded by $m_0(K,I)$, those in the next $r$ rows are bounded by $m_1(K,I)$, and so on. This gives the desired inequality.
\end{proof}

A similar proposition holds also for Pfaffians. Let $I\subset\mathbb{R}$, $r>0$, $\mathbf{K}(x,y)$ be a infinitely smooth function in a neighbourhood of $I$ that takes values in the space of $2\times 2$ matrices and satisfies the condition
\begin{equation*}
	\mathbf{K}(x,y)=-\mathbf{K}(y,x)^\top.	
\end{equation*}
As above, denote
\begin{equation*}
	\tilde m_l(\mathbf{K},I)=\max_{\substack{x,y\in I\\ i,j=1,2}}\biggl|\frac{\partial^l \mathbf{K}(x,y)_{ij}}{\partial y^l}\biggr|.
\end{equation*}

\begin{proposition}
	For all $n\in\mathbb{N}$ the following inequality holds
	\begin{equation*}
		\biggl|\idotsint_{I^n}\Pf\mathbf{K}(x_i,x_j)_{i,j=1,\dots,n}
		\cdot\prod_{i=1}^n dx_i\biggr|\le 
		|I|^{\frac{n(n+1)}{2}}\cdot \sqrt{(2n)!} \cdot \prod_{l=0}^{n-1}\tilde m_l(\mathbf{K},I).
	\end{equation*}
\end{proposition}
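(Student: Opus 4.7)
The plan is to reduce the claimed estimate for the Pfaffian to the matrix-valued determinant estimate already proved in the previous proposition, by means of the pointwise identity $(\Pf A)^2=\det A$ valid on antisymmetric matrices, combined with Cauchy--Schwarz on the integration over $I^n$.

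The first step is to observe that, because of the hypothesis $\mathbf{K}(x,y)=-\mathbf{K}(y,x)^\top$, the assembled $2n\times 2n$ block matrix $M(\vec x)=(\mathbf{K}(x_i,x_j))_{i,j=1,\dots,n}$ is genuinely antisymmetric. Hence $\det M(\vec x)=(\Pf M(\vec x))^2\ge 0$ pointwise on $I^n$. Cauchy--Schwarz applied to the integrand $1\cdot\Pf M(\vec x)$ over $I^n$ then yields
\begin{equation*}
\left|\idotsint_{I^n}\Pf\mathbf{K}(x_i,x_j)_{i,j=1,\dots,n}\prod_{i=1}^n dx_i\right|^2\le |I|^n\idotsint_{I^n}\det\mathbf{K}(x_i,x_j)_{i,j=1,\dots,n}\prod_{i=1}^n dx_i.
\end{equation*}

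The second step is to invoke the preceding matrix-valued proposition with $r=2$. Since the integrand on the right is pointwise non-negative, its integral equals the absolute value of its integral, and the proposition supplies
\begin{equation*}
\idotsint_{I^n}\det\mathbf{K}(x_i,x_j)\prod_{i=1}^n dx_i\le |I|^{n+n(n-1)}(2n)!\left(\prod_{l=0}^{n-1}m_l(\mathbf{K},I)\right)^{\!2}=|I|^{n^2}(2n)!\left(\prod_{l=0}^{n-1}m_l(\mathbf{K},I)\right)^{\!2},
\end{equation*}
with $m_l(\mathbf{K},I)$ as in \eqref{eq:max-l-matrix}. Combining the two displays, taking the square root, and using the trivial bound $m_l(\mathbf{K},I)=\tilde m_l(\mathbf{K},I)/l!\le\tilde m_l(\mathbf{K},I)$ gives
\begin{equation*}
\left|\idotsint_{I^n}\Pf\mathbf{K}(x_i,x_j)\prod_{i=1}^n dx_i\right|\le |I|^{n(n+1)/2}\sqrt{(2n)!}\prod_{l=0}^{n-1}\tilde m_l(\mathbf{K},I),
\end{equation*}
which is exactly the claimed inequality.

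There is no substantial obstacle here: the only conceptual move is noticing that the identity $(\Pf)^2=\det$ on antisymmetric matrices lets one off-load all of the divided-differences machinery onto the already-proved matrix-valued bound. An alternative, more direct route would be to mimic the divided-differences argument of the first two propositions on the Pfaffian expansion by grouping the $2n$ rows and $2n$ columns into $n$ pairs and performing the divided-difference column operations simultaneously on each pair; this direct approach is the natural one if one wishes to preserve the exact form $\prod\tilde m_l$, whereas the Cauchy--Schwarz shortcut is shorter and in fact produces the sharper variant in which each $\tilde m_l$ can be replaced by $\tilde m_l/l!$.
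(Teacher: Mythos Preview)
Your proof is correct and follows exactly the route sketched in the paper: apply the Cauchy--Bunyakovsky--Schwarz inequality to $1\cdot\Pf$, use $(\Pf)^2=\det$, and invoke the matrix-valued proposition with $r=2$. Your additional observation that the argument actually delivers the sharper bound with $m_l=\tilde m_l/l!$ in place of $\tilde m_l$ is also correct.
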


The proof immediately follows from the previous proposition, the Cauchy---Bunyakovsky---Schwarz inequality and the identity
\begin{equation*}
	(\Pf \mathbf{K}(x_i,x_j)_{i,j=1,\dots,n})^2=\det \mathbf{K}(x_i,x_j)_{i,j=1,\dots,n}.
\end{equation*}

\section{Estimates of entire functions}
\label{sec:entire}

We now estimate the maxima of derivatives~\eqref{eq:max-l} using the standard estimates (cf. Levin~\cite{Levin}) for the derivatives of an entire functions of finite order.

\begin{proposition}\label{prop:det-entire-est}
	If Assumption~\ref{asm:1} holds for the kernel~$\Pi$, then there exists $B>0$ that depends on $A$, $M$, $\sigma$, and $|I|$ only and such that for any $n\in\mathbb{N}$ and any $x_1,\dots,x_n\in I$ we have
	\begin{equation}\label{eq:det-est}
		\bigl|\det\Pi(x_i,x_j)_{i,j=1,\dots,n}\bigr|\le 
		\exp\biggl(Bn^2-\frac{1}{2\sigma}n^2\log n\biggr).
	\end{equation} 
\end{proposition}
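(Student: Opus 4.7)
The plan is to reduce the estimate to the entire part $\tilde\Pi$ and then apply the divided-differences proposition of Section~\ref{sec:div-diff} together with Cauchy-type bounds for derivatives of an entire function of finite order. First I would factor out the densities: by multilinearity of the determinant,
\[
\det\Pi(x_i,x_j)_{i,j=1,\dots,n}=\Bigl(\prod_{i=1}^n\rho(x_i)^2\Bigr)\det\tilde\Pi(x_i,x_j)_{i,j=1,\dots,n},
\]
so up to the harmless factor $(\sup_I\rho)^{2n}$, which is absorbed in the eventual $Bn^2$, the task reduces to bounding $|\det\tilde\Pi(x_i,x_j)|$.

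Next, the divided-differences identity used in the proof of the first proposition of Section~\ref{sec:div-diff} is valid \emph{pointwise}, before integration: writing $\det\tilde\Pi(x_i,x_j)=\Delta(x_1,\dots,x_n)\det(Q_{il})$ with $|Q_{il}|\le m_{l-1}(\tilde\Pi,I)$ and $|\Delta|\le|I|^{n(n-1)/2}$, one obtains
\[
\bigl|\det\tilde\Pi(x_i,x_j)_{i,j=1,\dots,n}\bigr|\le |I|^{n(n-1)/2}\cdot n!\cdot\prod_{l=0}^{n-1}m_l(\tilde\Pi,I).
\]
It therefore remains to estimate $m_l(\tilde\Pi,I)$ for $l=0,\dots,n-1$.

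For this I fix $x\in I$. Since $\tilde\Pi$ is real-valued on $\mathbb{R}^2$ and satisfies $\tilde\Pi(z,w)=\overline{\tilde\Pi(w,z)}$, on $\mathbb{R}$ the real-analytic function $y\mapsto\tilde\Pi(x,y)$ coincides with the restriction of the entire function $f_x(z):=\tilde\Pi(z,x)$, and Assumption~\ref{asm:1} applied with $p=x$ gives $|f_x(z)|=|\tilde\Pi(x,z)|\le Ae^{M|z-x|^\sigma}$. For $y\in I$, Cauchy's integral formula on the circle $|z-y|=R$ combined with the elementary inequality $|z-x|\le R+|I|$ yields
\[
\frac{1}{l!}|\partial_y^l\tilde\Pi(x,y)|\le A\cdot R^{-l}\cdot e^{M(R+|I|)^\sigma}.
\]
Optimising in $R$ by the choice $R\asymp(l/(M\sigma))^{1/\sigma}$ produces $m_l(\tilde\Pi,I)\le C^l\,l^{-l/\sigma}$ for a constant $C=C(A,M,\sigma,|I|)$, uniformly for $l\ge 1$ (with a trivial bound absorbing the case $l=0$).

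Collecting everything, the integral approximation $\sum_{l=1}^{n-1}l\log l=\tfrac12 n^2\log n-\tfrac14 n^2+O(n\log n)$ gives
\[
\log\prod_{l=0}^{n-1}m_l(\tilde\Pi,I)\le B'n^2-\frac{n^2\log n}{2\sigma},
\]
while $\log n!=O(n\log n)$, $\log|I|^{n(n-1)/2}=O(n^2)$ and $\log(\sup_I\rho)^{2n}=O(n)$ are all absorbed into $Bn^2$, yielding~\eqref{eq:det-est}. The main delicate point is the third step: one must combine the one-sided holomorphy of $\tilde\Pi$ in its first argument with the conjugate symmetry in order to legitimately apply Cauchy's formula to derivatives in the second variable, and then carry out the optimisation in $R$ with enough precision to recover the sharp coefficient $1/(2\sigma)$ in front of $n^2\log n$, rather than a weaker constant.
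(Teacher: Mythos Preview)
Your argument is correct and follows the same route as the paper: factor out $\rho$ via multilinearity, apply the pointwise divided-differences identity to bound $|\det\tilde\Pi|$ by $|I|^{n(n-1)/2}\,n!\,\prod_{l}m_l$, and estimate each $m_l(\tilde\Pi,I)$ by Cauchy's formula with radius $R\asymp (l/M)^{1/\sigma}$, exactly as in the lemma on entire functions; the summation $\sum_{l<n}l\log l=\tfrac12 n^2\log n+O(n^2)$ then yields the sharp coefficient $1/(2\sigma)$. Your treatment is in fact more explicit than the paper's, which records only the Cauchy-type lemma and the $\rho$-factorisation remark and leaves the combination to the reader; your care with the conjugate symmetry to pass from holomorphy in the first variable to $y$-derivatives is the right justification, and your observation that the factor $(\sup_I\rho)^{2n}$ makes $B$ depend also on $\sup_I\rho$ (as in Theorem~\ref{thm:main}, though omitted in the proposition's statement) is well taken.
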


This proposition has a direct Pfaffian analogue. Let $\mathbf{K}$ be a $2\times 2$-matrix-valued kernel
\begin{equation*}
	\mathbf{K}(x,y)=
	\begin{pmatrix}
		\mathbf{K}_{11}(x,y)&\mathbf{K}_{12}(x,y)\\
		\mathbf{K}_{21}(x,y)&\mathbf{K}_{22}(x,y)\\
	\end{pmatrix},
\end{equation*}
such that each of the functions $\mathbf{K}_{ij}$ satisfies the same uniform estimate
\begin{equation*}
	|\mathbf{K}_{ij}(p,z)|\le Ae^{M|z-p|^\sigma}.
\end{equation*}
Then there exists $B>0$ that depends on $A,M,\sigma$, and $|I|$ such that we have
\begin{equation}\label{eq:pf-est}
	|\Pf\mathbf{K}(x_i,x_j)|\le 
	\exp\biggl(Bn^2-\frac{1}{2\sigma}n^2\log n\biggr).
\end{equation}

For the proof of Proposition~\ref{prop:det-entire-est} and its Pfaffian analogue we now recall the following standard estimate (cf. Levin~\cite{Levin}).

\begin{lemma}
	Let $A>0$, $M>0$, $\sigma>0$, and $f(z)$ be an entire function such that
	\begin{equation*}
		|f(z)|\le Ae^{M|z|^\sigma}.
	\end{equation*}	
	Then for all $n$ we have
	\begin{equation*}
		\frac{1}{n!}\biggl|\frac{d^nf(0)}{dz^n}\biggr|\le Ae^{n+1}\biggl(\frac{n+1}{M}\biggr)^{-n/\sigma}.
	\end{equation*}
\end{lemma}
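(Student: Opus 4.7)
The plan is a one-line application of the Cauchy estimate for Taylor coefficients followed by an optimal choice of radius. By Cauchy's formula, for any $R>0$,
\begin{equation*}
	\frac{1}{n!}\biggl|\frac{d^nf(0)}{dz^n}\biggr|
	=\biggl|\frac{1}{2\pi i}\oint_{|z|=R}\frac{f(z)}{z^{n+1}}\,dz\biggr|
	\le \frac{\max_{|z|=R}|f(z)|}{R^n}
	\le \frac{A\,e^{MR^\sigma}}{R^n}.
\end{equation*}
So it suffices to choose $R$ to make the right-hand side as small (or at least as small as the target bound) as possible.

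Next, I would choose $R$ to balance the numerator against the denominator. Taking the logarithm of $e^{MR^\sigma}/R^n$ and differentiating in $R$ gives the true optimum at $R^\sigma=n/(M\sigma)$, but the cleaner and almost equivalent choice $R^\sigma=(n+1)/M$, i.e.\ $R=((n+1)/M)^{1/\sigma}$, is what matches the stated form exactly. With this choice the exponent becomes $MR^\sigma=n+1$ and the denominator becomes $R^n=((n+1)/M)^{n/\sigma}$, which yields
\begin{equation*}
	\frac{1}{n!}\biggl|\frac{d^nf(0)}{dz^n}\biggr|
	\le A\,e^{n+1}\biggl(\frac{n+1}{M}\biggr)^{-n/\sigma},
\end{equation*}
precisely the asserted inequality.

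There is no real obstacle here: the only content of the proof is the Cauchy integral estimate together with the choice of radius, and the constant $e^{n+1}$ (rather than the sharper $e^{n/\sigma}$ that the true critical point would give) is exactly what one obtains by inserting the convenient radius $R=((n+1)/M)^{1/\sigma}$. I would remark in passing that for the applications in Section~\ref{sec:div-diff}--\ref{sec:end-proof} the coefficient $A$ and the exponent of $e^{n+1}$ are absorbed into the constant $B$ in Proposition~\ref{prop:det-entire-est}, so any constant of the form $A\,e^{Cn}(n/M)^{-n/\sigma}$ would work equally well; the stated form is chosen for definiteness.
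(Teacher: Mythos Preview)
Your proof is correct and follows exactly the paper's approach: apply the Cauchy integral formula on the circle $|z|=R$ and substitute $R=((n+1)/M)^{1/\sigma}$. The additional commentary on the true critical point and the absorption of constants into $B$ is accurate but not needed for the lemma itself.
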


\begin{proof}
	One may apply the Cauchy formula
	\begin{equation*}
		\frac{1}{n!}\frac{d^nf(0)}{dz^n}=
		\frac{1}{2\pi i}\oint_{|z|=R}\frac{f(z)}{z^{n+1}}\,dz
	\end{equation*}
	with $R=\bigl(\frac{n+1}{M}\bigr)^{1/\sigma}$.
\end{proof}

\begin{remark*}
	Let $\Pi(x,y)=\rho(x)\rho(y)\tilde{\Pi}(x,y)$. Then
	\begin{equation*}
		\det\Pi(x_i,x_j)_{i,j=1,\dots,n}=\biggl(\prod_{i=1}^{n}\rho(x_i)\biggr)^2
		\cdot\det\tilde{\Pi}(x_i,x_j),
	\end{equation*}
	and we can apply the previous estimate to the function $\tilde{\Pi}$.
\end{remark*}

Substituting the above estimate~\eqref{eq:det-est} into~\eqref{eq:prob-det} we arrive at the estimate~\eqref{eq:prob-bound} in Theorem~\ref{thm:main}:
\begin{equation*}
	\Prob_{\Pi}(\#_I\ge n)\le\exp\biggl(Bn^2-\frac{1}{2\sigma}n^2\log n\biggr).
\end{equation*}

\section{Conclusion of the proof of Theorem~\ref{thm:main}}
\label{sec:end-proof}
	
Using the above estimate~\eqref{eq:prob-bound} for the probabilities of large values of $\#_I$, we now proceed to estimating the exponential moments. We start with a simple proposition.

\begin{proposition}\label{prop:int-est-tlogt}
	Let $B>0$, $\delta>0$, and $\rho$ be a nonnegative function on $[1,+\infty)$ satisfying 
	\begin{equation*}
		\rho(t)\le e^{Bt-\delta t\log t}.
	\end{equation*}
	Then there exist positive constants $c_1, c_2$ depending only on $B,\delta$ such that for all $\lambda>0$ the following inequality holds:
	\begin{equation*}
		\int_1^{+\infty}e^{\lambda t}\rho(t)\,dt\le 
		\exp\bigl(c_1\exp(\lambda/\delta)+c_2\bigr).
	\end{equation*} 
\end{proposition}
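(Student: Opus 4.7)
The plan is to reduce to estimating $\int_1^\infty e^{g(t)}\,dt$ with $g(t)=(B+\lambda)t-\delta t\log t$ by a Laplace-type argument at the unique critical point. First I would compute $g'(t)=(B+\lambda)-\delta-\delta\log t$ and locate the maximum at
$$t^*=\exp\!\Bigl(\tfrac{\lambda+B}{\delta}-1\Bigr)=\tfrac{1}{e}\,e^{B/\delta}\,e^{\lambda/\delta}.$$
A direct substitution gives $g(t^*)=\delta t^*=\tfrac{\delta}{e}e^{B/\delta}\,e^{\lambda/\delta}$, which is already the target quantity $c_1 e^{\lambda/\delta}$ up to an additive constant, so this identifies the expected value of $c_1$.

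Next I would exploit two more pieces of information at the point $e t^*=e^{(\lambda+B)/\delta}$: by a direct check, $g(et^*)=0$ and $g'(et^*)=-\delta$. Since $g''(t)=-\delta/t<0$, the function $g$ is concave, so $g'(t)\le -\delta$ for all $t\ge et^*$ and therefore
$$g(t)\le g(et^*)-\delta(t-et^*)=-\delta(t-et^*)\qquad (t\ge et^*).$$
Thus the tail is easily handled: $\int_{et^*}^\infty e^{g(t)}\,dt\le 1/\delta$. On the bounded part $[1,et^*]$ I would simply bound the integrand by the global maximum $e^{g(t^*)}$, obtaining $\int_1^{et^*}e^{g(t)}\,dt\le et^*\cdot e^{\delta t^*}$. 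Combining,
$$\int_1^\infty e^{g(t)}\,dt\;\le\; et^*\,e^{\delta t^*}+\tfrac{1}{\delta}.$$

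The remaining step is cosmetic. Taking logarithms and using $et^*\ge 1$ (which holds as soon as $\lambda+B\ge 0$, hence always under our hypothesis $\lambda>0$, $B>0$), I get
$$\log\!\int_1^\infty e^{g(t)}\,dt\;\le\;\delta t^*+\log(et^*)+\log(1+\tfrac{1}{\delta})
=\tfrac{\delta}{e}e^{B/\delta}\,e^{\lambda/\delta}+\tfrac{\lambda+B}{\delta}+O(1).$$
To absorb the linear term $\lambda/\delta$ into the exponential, I would simply use $x\le e^x$ for $x\ge 0$ applied to $x=\lambda/\delta$, which bumps $c_1$ by $1$ and yields the desired form $c_1\exp(\lambda/\delta)+c_2$.

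The only potential subtlety is keeping the argument uniform in $\lambda>0$: when $\lambda+B<\delta$ the critical point $t^*$ lies to the left of the interval $[1,\infty)$, so $g$ is decreasing on the integration range. However, since $g(t)\le g(t^*)$ still holds globally and $et^*>1$ still holds (because $et^*=e^{(\lambda+B)/\delta}$), the same split $[1,et^*]\cup[et^*,\infty)$ and the same two bounds go through unchanged. This is the main ``obstacle'' to check, but it is immediate. No other estimates are required.
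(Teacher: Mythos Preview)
Your argument is correct and follows essentially the same Laplace-type strategy as the paper: locate the maximizer $t^*$ of $g(t)=(\lambda+B)t-\delta t\log t$, bound the integral on a finite window by (length)$\times e^{g(t^*)}$, and control the tail using concavity to get an exponentially decaying majorant. The only difference is cosmetic: the paper splits at $5t^*/4$ (giving tail slope $-\delta\log(5/4)$), while you split at $et^*$, which is a slightly cleaner choice since $g(et^*)=0$ and $g'(et^*)=-\delta$ exactly; both lead to the same form $c_1 e^{\lambda/\delta}+c_2$.
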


\begin{remark*}
	Precise explicit expressions can of course be given for $c_1,c_2$, but we do not need them.
\end{remark*}

\begin{proof}
	Denote
	\begin{equation*}
		S(t)=(\lambda+B)t-\delta t\log t
	\end{equation*}
	and write
	\begin{align*}
		S'(t)&{}=\lambda+B-\delta-\delta \log t,\\
		S''(t)&{}=-\delta/t.	
	\end{align*}
	The function $S$ is concave on the interval $(1,\infty)$ and reaches its maximum in the point
	\begin{equation*}
		t_0=\exp\biggl(\frac{\lambda+B-\delta}{\delta}\biggr).
	\end{equation*}
	Then
	\begin{equation*}
		\int_1^{5t_0/4} e^{S(t)}\,dt\le \frac{5t_0}{4} e^{S(t_0)}.
	\end{equation*}
	For $t>5t_0/4$ the following estimate holds:
	\begin{equation*}
		S(t)\le S(5t_0/4)+S'(5t_0/4)\biggl(t-\frac{5t_0}{4}\biggr)\le
		S(t_0)-\delta\log\frac{5}{4}\cdot\biggl(t-\frac{5t_0}{4}\biggr),
	\end{equation*}
	hence
	\begin{equation*}
		\int_{5t_0/4}^{+\infty} e^{S(t)}\,dt\le \frac{1}{\delta\log(5/4)} e^{S(t_0)}.
	\end{equation*}
	Combining these inequalities we obtain the desired estimate.
\end{proof}

Let us conclude the proof of Theorem~\ref{thm:main}.
The estimate~\eqref{eq:prob-bound} can be rewritten as
\begin{equation*}
	\Prob_{\Pi}(\#_I^2\ge t)\le\exp\biggl(\tilde B t-\frac{t}{\sigma}\log t\biggr),\quad t\ge 1.
\end{equation*}
Summation by parts yields
\begin{multline}\label{eq:est-expect}
	\mathbb{E}\exp(\lambda\#_I^2)=
	\sum_{k=0}^\infty e^{\lambda k}\bigl(\Prob(\#_I^2\ge k)-\Prob(\#_I^2\ge k+1)\bigr)={}\\
	{}=1+\sum_{j=1}^\infty \Prob(\#_I^2\ge j)\bigl(e^{\lambda j}-e^{\lambda(j-1)}\bigr)=
	1+\int_{1}^\infty \lambda e^{\lambda(t-1)}\Prob(\#_I^2\ge t)\,dt.
\end{multline}
By Proposition~\ref{prop:int-est-tlogt} we have
\begin{equation*}
	\mathbb{E}\exp(\lambda\#_I^2)\le 
	1+\lambda e^{-\lambda}\exp(c_1e^{\lambda\sigma}+c_2).
\end{equation*}
To get the desired estimate~\eqref{eq:thm-main} we consider separately $\lambda\in [0,1]$ and $\lambda\in [1,\infty)$. In the former case we have
\begin{equation*}
	\mathbb{E}\exp(\lambda\#_I^2)\le 1+\lambda\cdot \exp(c_1e^{\sigma}+c_2),
\end{equation*}
while the right-hand side of~\eqref{eq:thm-main} is not less than $1+\lambda\cdot c\sigma$, so for large $c$ we have~\eqref{eq:thm-main} for all $\lambda\in[0,1]$.

Consider now $\lambda\ge 1$. Here
\begin{equation*}
	\mathbb{E}\exp(\lambda\#_I^2)\le 1+e^{-1}\exp(c_1e^{\lambda\sigma}+c_2)\le
	\exp(c_1e^{\lambda\sigma}+c_2+\log 2).	
\end{equation*}
It remains to observe that if $c$ is large enough we have 
\begin{equation*}
	c_1e^{\lambda\sigma}+c_2+\log 2\le c(e^{\lambda\sigma}-1)
\end{equation*}
for all $\lambda>1$.
Theorem~\ref{thm:main} is proved completely.

This final argument can be seen in the light of the following simple general remark.

\begin{lemma}Let $\Psi$ be an increasing convex function, $\Psi(0)=1$, $\Psi'(0)>0$. Then there exists a constant $d>0$ such that for all $\lambda>0$ we have
	\begin{equation*}
		\min(e^{\Psi(\lambda)}, 1+\lambda e^{\Psi(\lambda)})\le e^{d(\Psi(\lambda)-1)}.
	\end{equation*}
\end{lemma}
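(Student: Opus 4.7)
The plan is to split the argument by a fixed threshold $\lambda_*>0$ (taking $\lambda_*=1$ for concreteness) into a regime where the first branch $e^{\Psi(\lambda)}$ of the minimum handles the bound, and a regime where the second branch $1+\lambda e^{\Psi(\lambda)}$ does. The heuristic is that the target $e^{d(\Psi(\lambda)-1)}$ equals $1$ at $\lambda=0$, so it cannot dominate $e^{\Psi(\lambda)}\ge e$ near the origin; but $1+\lambda e^{\Psi(\lambda)}$ also tends to $1$ linearly as $\lambda\to 0^+$, matching the leading-order behavior of the right-hand side.

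First I would record the two elementary facts used throughout: monotonicity together with $\Psi(0)=1$ gives $\Psi(\lambda)\ge 1$ for $\lambda\ge 0$, and convexity gives the supporting-line inequality $\Psi(\lambda)-1\ge\Psi'(0)\lambda$; in particular $M:=\Psi(1)\ge 1+\Psi'(0)>1$.

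For $\lambda\ge 1$ the first branch suffices: the inequality $e^{\Psi(\lambda)}\le e^{d(\Psi(\lambda)-1)}$ rearranges to $(d-1)\Psi(\lambda)\ge d$, which holds as soon as $d/(d-1)\le M$, that is, $d\ge M/(M-1)$. For $\lambda\in(0,1]$ I would instead chain the bounds
\[
1+\lambda e^{\Psi(\lambda)}\le 1+\lambda e^M\le 1+d\Psi'(0)\lambda\le 1+d(\Psi(\lambda)-1)\le e^{d(\Psi(\lambda)-1)},
\]
where the second step is valid as soon as $d\ge e^M/\Psi'(0)$, the third uses the supporting-line inequality, and the last is $1+x\le e^x$. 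Taking $d:=\max\{M/(M-1),\,e^M/\Psi'(0)\}$ closes both cases simultaneously. There is no real obstacle; the only thing worth noting is that the threshold $\lambda_*$ must be chosen independently of $d$ (so that $M$ does not depend on $d$), but any fixed positive threshold works with suitably modified constants.
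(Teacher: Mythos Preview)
Your proof is correct and follows essentially the same approach as the paper: split at $\lambda=1$, use the first branch for $\lambda\ge 1$ via $(d-1)\Psi(\lambda)\ge d$ together with $\Psi(\lambda)\ge\Psi(1)$, and for $\lambda\in(0,1]$ use the second branch together with the supporting-line inequality $\Psi(\lambda)-1\ge\Psi'(0)\lambda$ and $1+x\le e^{x}$; the resulting constants $d\ge M/(M-1)$ and $d\ge e^{M}/\Psi'(0)$ coincide with the paper's.
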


\begin{proof}
	First note that for $\lambda\ge 1$ we have $e^{\Psi(\lambda)}\le 1+\lambda e^{\Psi(\lambda)}$, hence we need that
	\begin{equation*}
		\Psi(\lambda)\le d(\Psi(\lambda)-1).
	\end{equation*}
	The latter inequality holds for all $\lambda>1$ if $\Psi(1)\ge 1+1/(d-1)$, that is, for $d\ge \Psi(1)/(\Psi(1)-1)$.
	
	Consider $\lambda\in[0,1]$. We have
	\begin{equation*}
		\min(e^{\Psi(\lambda)}, 1+\lambda e^{\Psi(\lambda)})\le 
		1+\lambda e^{\Psi(\lambda)}\le 1+\lambda e^{\Psi(1)}.
	\end{equation*}
	Convexity yields $\Psi(\lambda)-1\ge \Psi'(0)\lambda$, 
	so if $d\ge e^{\Psi(1)}/\Psi'(0)$, we have the desired inequality for all $\lambda\in [0,1]$.
\end{proof}

\section{The case of the sine-process}

For general $\beta$, Holcomb and Valk\'o \cite{holcomb}  obtained precise estimates for the probabilities 
$\Prob(\#_I\ge n)$ under the $\mathrm{Sine}_{\beta}$-process. To the best of my understanding, the validity of the estimate \eqref{eq:main-sin-2}, whose proof, at least, in the scheme used in this paper, relies on the negative correlations for determinantal point processes, remains open for general $\beta$, indeed, already for $\beta=4$.

The sine-process, the scaling limit of radial part of Haar measures on unitary groups of growing dimension is a point process $\Prob_{\SIN}$ on $\mathbb{R}$, whose correlation measures have the form
\begin{equation*}
	\rho_n(x_1,\dots,x_n)\,dx_1\dots dx_n,
\end{equation*}
where $\rho_n$, the $n$-th correlation function, is given by the formula
\begin{equation*}
	\rho_n(x_1,\dots,x_n)=\det\biggl(\frac{\sin\pi(x_i-x_j)}{\pi(x_i-x_j)}\biggr)_{i,j=1,\dots,n}.
\end{equation*}

Let us state the main result of the note in the case of the sine-process. 
\begin{proposition}\label{prop:main-sin}
	Let $I\subset\mathbb{R}$ be a interval of length~1. 
	Then there exists a constant $c_1>0$ such that for all $\lambda>0$ we have
	\begin{equation}\label{eq:main-sin-1}
		\mathbb{E}_{\Prob_{\SIN}}e^{\lambda\#_I^2}\le \exp\bigl(c_1\bigl(\exp(\lambda)-1\bigr)\bigr).
	\end{equation}
	Similarly, for any $q$ satisfying $\|q\|_{(1,\infty)}<+\infty$ we have
	\begin{equation}\label{eq:main-sin-2}
		\mathbb{E} e^{\lambda\mathbf{S}_q}\le 
		\exp\bigl(c_1\bigl(\exp(\lambda\|q\|_{(1,\infty)})-1\bigr)\bigr).
	\end{equation}
\end{proposition}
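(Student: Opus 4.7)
The plan is to derive Proposition~\ref{prop:main-sin} as a direct instance of Theorem~\ref{thm:main} together with Proposition~\ref{prop:est-Phi}. All that is needed is to verify Assumption~\ref{asm:1} (respectively Assumption~\ref{asm:2}) for the sine-kernel and then to invoke the machinery already proved.

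First I would fix an interval $I\subset\mathbb{R}$ of length $1$ and write the sine-kernel in the form required by Assumption~\ref{asm:1}, namely with $\rho\equiv 1$ and
\begin{equation*}
    \tilde{\Pi}(z,w)=\frac{\sin\pi(z-w)}{\pi(z-w)},
\end{equation*}
extended to $z=w$ by continuity. This is an entire function of $z$ for each fixed $w$, is symmetric and real-valued on $\mathbb{R}^2$, and satisfies $\tilde\Pi(z,w)=\overline{\tilde\Pi(w,z)}$. The pointwise bound $|\sin\zeta|\le e^{|\im\zeta|}\le e^{|\zeta|}$ combined with a separate treatment of small $|z-p|$ (where $\tilde\Pi(p,z)$ is bounded by $1+O(|z-p|^2)$) yields an estimate
\begin{equation*}
    |\tilde{\Pi}(p,z)|\le Ae^{\pi|z-p|}\quad\text{for all }z\in\mathbb{C},\ p\in\mathbb{R},
\end{equation*}
with a universal constant $A>0$. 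Hence Assumption~\ref{asm:1} is satisfied with $\sigma=1$, $M=\pi$, $A$ as above, and $\rho\equiv 1$; similarly Assumption~\ref{asm:2} is satisfied on $U=\mathbb{R}$ with the same constants.

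Next I would simply apply Theorem~\ref{thm:main}: since the dependence of the constant $c$ is only on $A$, $M$, $\sigma$, $|I|$ and $\sup_I\rho$, and here $|I|=1$, $\sup_I\rho=1$, $\sigma=1$, we immediately obtain a constant $c_1>0$ with
\begin{equation*}
    \mathbb{E}_{\Prob_{\SIN}}e^{\lambda\#_I^2}\le \exp\bigl(c_1(e^\lambda-1)\bigr),
\end{equation*}
which is precisely~\eqref{eq:main-sin-1}.

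For~\eqref{eq:main-sin-2} I would invoke Proposition~\ref{prop:est-Phi}, taking $\Phi(\lambda)=\exp(c_1(e^\lambda-1))$; this $\Phi$ is increasing, continuous, satisfies $\Phi(0)=1$ and $\log\Phi(\lambda)=c_1(e^\lambda-1)$ is visibly convex, so $\Phi$ is super-multiplicative in the sense required. The sine-process is determinantal with a self-adjoint projection kernel (in particular with a self-adjoint contraction kernel) and therefore has negative associations, which is the remaining hypothesis of Proposition~\ref{prop:est-Phi}. Applying this proposition with the bound~\eqref{eq:main-sin-1} on every unit interval $[k,k+1]$ (valid by the translation-invariance of the sine-process, so with the same $c_1$ for each $k$) yields
\begin{equation*}
    \mathbb{E}_{\Prob_{\SIN}}e^{\lambda\mathbf{S}_q}\le \exp\bigl(c_1\bigl(\exp(\lambda\|q\|_{(1,\infty)})-1\bigr)\bigr),
\end{equation*}
as desired. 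The only conceivable obstacle is the verification of the entire-function bound on $\tilde\Pi$ with the correct exponent $\sigma=1$; this is entirely routine, and once it is in place the proposition follows mechanically from the general results already established.
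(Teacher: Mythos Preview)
Your proposal is correct and matches the paper's approach exactly: the paper presents Proposition~\ref{prop:main-sin} as the specialization of Theorem~\ref{thm:main} and Corollary~\ref{cor:1} (the latter resting on Proposition~\ref{prop:est-Phi}) to the sine-kernel, the only verification required being that $\tilde\Pi(p,z)=\sin\pi(z-p)/(\pi(z-p))$ is entire of order $\sigma=1$ with $\rho\equiv 1$. Your checks of Assumption~\ref{asm:1}/\ref{asm:2}, of the convexity of $\log\Phi$, and your use of translation-invariance and negative associations are all in order.
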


Denote
\begin{equation*}
	S(t)=\frac{\sin\pi t}{\pi t}
\end{equation*} 
and introduce a matrix-valued kernel
\begin{equation*}
	\mathbf{K}_4(x,y)=\frac{1}{2}
	\begin{pmatrix}
		-IS(x-y)& S(x-y)\\
		-S(x-y)&DS(x-y)
	\end{pmatrix},
\end{equation*}
where
\begin{equation*}
	DS(t)=\frac{dS}{dt}(t),\quad
	IS(t)=\int_0^t S(u)\,du.
\end{equation*}
By definition the matrix $\mathbf{K}_4$ is antisymmetric for any $x,y\in\mathbb{R}$.

The symplectic sine-process is a point process $\Prob_{\PFSIN}$ on $\mathbb{R}$, whose correlation functions have the form
\begin{equation*}
	\rho_n(x_1,\dots,x_n)=\Pf(\mathbf{K}_4(x_i,x_j))_{i,j=1,\dots,n}.
\end{equation*}

The symplectic sine-process arises, for instance, as the scaling limit for the radial parts of  quaternionic-Hermitian matrices of growing size.  

Let us state the main result of the note in the case of the Pfaffian symplectic sine-process. 
\begin{theorem}\label{thm:main-Pfsin}
	Let $I\subset\mathbb{R}$ be a interval of length~1. 
	Then there exists a constant $c_1>0$ such that for all $\lambda>0$ we have
	\begin{equation*}
		\mathbb{E}_{\Prob_{\PFSIN}}e^{\lambda\#_I^2}\le
		\exp\bigl(c_1\bigl(\exp(\lambda)-1\bigr)\bigr).
	\end{equation*}
\end{theorem}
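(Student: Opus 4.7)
The plan is to follow verbatim the three-step architecture of the proof of Theorem~\ref{thm:main} — divided-difference bound for the Pfaffian, derivative estimate coming from finite order, Laplace transform of the tail — all of whose Pfaffian ingredients have already been recorded in Sections~\ref{sec:div-diff}--\ref{sec:end-proof}. The only item requiring fresh verification is the uniform entire-function hypothesis on the four scalar entries of the $2\times 2$ matrix kernel $\mathbf{K}_4$ that serves as the correlation kernel of $\Prob_{\PFSIN}$; once this is in hand, the pointwise bound \eqref{eq:pf-est} on the Pfaffian is available with $\sigma=1$ and everything assembles mechanically.

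Concretely, I would first note that each entry of $\mathbf{K}_4(x,y)$ is of the form $c\cdot f(x-y)$ where $f$ is one of $S$, $DS$, $IS$. Standard estimates give $|\sin(\pi w)|\le e^{\pi|w|}$ for $w\in\mathbb{C}$, whence $|S(w)|\le C_0 e^{\pi|w|}$ after handling the removable singularity at the origin; termwise differentiation yields the same kind of bound for $DS$; and integrating $S$ along the straight segment from $0$ to $w$ gives $|IS(w)|\le |w|\cdot C_0 e^{\pi|w|}\le C_1 e^{M|w|}$ for any fixed $M>\pi$. Substituting $w=p-z$ with $p\in I$ and $z\in\mathbb{C}$, the four entries therefore satisfy
\begin{equation*}
|\mathbf{K}_{4,ij}(p,z)|\le A\, e^{M|z-p|}\qquad(p\in I,\ z\in\mathbb{C}),
\end{equation*}
with constants $A,M$ depending only on $|I|=1$, which is exactly the hypothesis needed in the Pfaffian analogue of Proposition~\ref{prop:det-entire-est} that produces the pointwise bound \eqref{eq:pf-est} with order parameter $\sigma=1$.

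From \eqref{eq:pf-est} and the Pfaffian form of the correlation functions $\rho_n=\Pf\mathbf{K}_4(x_i,x_j)$, the factorial-moment inequality used in \eqref{eq:prob-det} yields
\begin{equation*}
\Prob_{\PFSIN}(\#_I\ge n)\le\frac{|I|^n}{n!}\exp\bigl(Bn^2-\tfrac{1}{2}n^2\log n\bigr),
\end{equation*}
and, since $|I|=1$ and $\log(n!)=n\log n-n+O(\log n)$ is of lower order than $\tfrac{1}{2}n^2\log n$, this is absorbed into a tail bound of the same shape as \eqref{eq:prob-bound} with a possibly larger constant $B'$. I then apply the argument of Section~\ref{sec:end-proof} word for word: rewrite the tail in the variable $t=\#_I^2$, use the summation-by-parts identity \eqref{eq:est-expect}, invoke Proposition~\ref{prop:int-est-tlogt} with $\delta=1/\sigma=1$, and handle $\lambda\in[0,1]$ versus $\lambda\ge 1$ by the final case split at the end of Section~\ref{sec:end-proof}. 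This delivers the claimed estimate.

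The only mildly delicate point — and the one I would verify most carefully — is the uniform entire-function bound for the antiderivative $IS(x-y)$: unlike $S$ and $DS$, the integrated kernel is not obviously dominated by a simple exponential, and one must choose the integration path (a segment based at an interior reference point) so that the resulting bound depends on $|z-p|$ rather than on $|z|$, and so that the constant term $IS(p)$ remains uniformly controlled as $p$ ranges over $I$. Once that is done, no further obstruction arises, and the proof runs on autopilot through the machinery already established for the determinantal case.
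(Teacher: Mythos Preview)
Your proposal is correct and matches the paper's (implicit) approach: the paper states Theorem~\ref{thm:main-Pfsin} without a separate proof precisely because all Pfaffian ingredients have been recorded in parallel in Sections~\ref{sec:div-diff}--\ref{sec:end-proof}, and the entries of $\mathbf{K}_4$ are visibly entire of order~$1$ in each variable. Your only caveat is slightly overcomplicated: since the kernel is translation-invariant, $IS$ is evaluated at $x-y$, so the bound $|IS(w)|\le C_1 e^{M|w|}$ (obtained by integrating $S$ along the segment $[0,w]$) immediately gives $|\mathbf{K}_{4,11}(p,z)|\le C_1 e^{M|z-p|}$ with no separate ``constant term $IS(p)$'' to control.
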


\begin{remark*}
	In the specific case of the sine-process the estimate of this note can be made more precise using sharp bounds obtained by Bonami---Jaming---Karoui~\cite{BoJaKa} on the eigenvalue of the sine-kernel restricted to an interval and recalling the theorem of Hough---Krishnapur---Peres---Vir\'ag~\cite{HoughEtAl} to the effect that the distribution under a determinantal point process of the number of particles in a compact subset is that of the infinite sum of Bernoulli random variables whose probabilities of success are precisely the eigenvalues of the kernel, restricted onto the compact set.
	
	The argument of Bonami---Jaming---Karoui relies on the existence of a second-order differential operator commuting with the sine-kernel restricted onto an interval. The existence of such an operator is only established for a few specific examples: the sine, the Bessel, and the Airy kernels. Furthermore, to the best of my knowledge, extending the sharp bounds of Bonami---Jaming---Karoui onto the case of the Bessel and the Airy kernels remains an open problem. 
\end{remark*}

\section{The case of the Bessel kernel}

Let $s>-1$. Recall that the Bessel kernel $J_s$ with parameter $s$ is given by the formula
\begin{equation*}
	J_s(x,y)=\frac{\sqrt{x}J_{s+1}(\sqrt{x})J_{s}(\sqrt{y})-
	\sqrt{y}J_{s+1}(\sqrt{y})J_{s}(\sqrt{x})}{2(x-y)}.
\end{equation*}
The Bessel kernel induces a determinantal point process $\Prob_{J_s}$ of $\Conf((0,+\infty))$.

Note that the Bessel function $J_s(\sqrt{x})$ has the form
\begin{equation*}
	J_s(\sqrt{x})=\biggl(\frac{x}{4}\biggr)^{s/2}\sum_{m=0}^{\infty}
	\frac{(-1)^m}{m!\Gamma(m+s+1)}\cdot\biggl(\frac{x}{4}\biggr)^m.
\end{equation*}
The function $(x/4)^{-s/2}J_s(\sqrt{x})$ is entire of order~1. Thus setting $\rho(x)=(x/4)^{s/2}$ we note that the Bessel kernel satisfies Assumption~\ref{asm:1} with $\sigma=1$. 

The analogues of Proposition~\ref{prop:main-sin} will be slightly different for different values of $s$.

\begin{proposition}\label{prop:main-Bessel}
	Let $s\ge0$ and let $I=[a,b]$ be a subinterval of the closed half-line $[0,+\infty)$. Then there exists a constant $c_3>0$ such that for all $\lambda>0$ we have
	\begin{equation}\label{eq:main-Bessel-1}
		\mathbb{E}_{\Prob_{J_s}}\exp(\lambda\#_I^2)\le
		\exp(c_3(\exp(\lambda)-1)).
	\end{equation}
	If $s\in(-1,0)$, then the estimate~\eqref{eq:main-Bessel-1} holds for the subintervals $I$ of the open half-line $(0,+\infty)$.
\end{proposition}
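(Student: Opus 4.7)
The plan is to deduce Proposition~\ref{prop:main-Bessel} as a direct corollary of Theorem~\ref{thm:main}; the content of the argument lies entirely in verifying that the Bessel kernel $J_s$ satisfies Assumption~\ref{asm:1} with $\sigma = 1$ and $\rho(x) = (x/4)^{s/2}$.

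First I would explicitly factor the kernel. Introducing the entire function
\[
	g_s(x) = (x/4)^{-s/2}\,J_s(\sqrt{x}) = \sum_{m=0}^{\infty} \frac{(-1)^m}{m!\,\Gamma(m+s+1)}\biggl(\frac{x}{4}\biggr)^m
\]
and noting the identity $\sqrt{x}\,J_{s+1}(\sqrt{x}) = (x/4)^{s/2}\cdot(x/2)\cdot g_{s+1}(x)$, I would pull the factor $(x/4)^{s/2}(y/4)^{s/2}$ out of both terms of the numerator of $J_s(x,y)$ to obtain
\[
	J_s(x,y) = \rho(x)\,\rho(y)\,\tilde{\Pi}(x,y),\qquad
	\tilde{\Pi}(x,y) = \frac{(x/2)\,g_{s+1}(x)\,g_s(y) - (y/2)\,g_{s+1}(y)\,g_s(x)}{2(x-y)}.
\]
Since the numerator vanishes on the diagonal, $\tilde{\Pi}(z,w)$ is entire in $z$ for any fixed $w$; the symmetry in $(z,w)$ and reality on $\mathbb{R}^2$ follow from the fact that $g_s$ and $g_{s+1}$ have real Maclaurin coefficients.

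Second, I would establish the uniform bound $|\tilde{\Pi}(p, z)| \le A\,e^{M|z-p|}$ for $p \in I$. The Maclaurin coefficients of $g_s$ decay like $1/(m!)^2$, so $g_s$ and $g_{s+1}$ are entire of order at most $1$ and satisfy $|g_s(z)|,|g_{s+1}(z)| \le A_0 e^{M_0|z|}$ on all of $\mathbb{C}$. For $|z - p| \ge 1$ I would bound the numerator of $\tilde{\Pi}(p,z)$ termwise using these estimates together with the uniform boundedness of $|p|$, $|g_s(p)|$, $|g_{s+1}(p)|$ on the compact interval $I$; the denominator $|z-p| \ge 1$ is harmless, and the shift $|z| \le |z-p| + \sup_I|p|$ is absorbed into the constants. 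For $|z-p| \le 1$ one invokes entireness in $z$ together with joint continuity in $(p,z)$ to produce a uniform bound on the compact set $I \times \overline{D(p,1)}$. This is essentially the standard Cauchy estimate for a divided difference of an entire function of finite order and is the only nontrivial step of the whole argument.

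Third, I would handle the factor $\rho$ case by case. For $s \ge 0$, $\rho(x) = (x/4)^{s/2}$ is continuous and bounded on every compact $I \subseteq [0,+\infty)$; the literal positivity in Assumption~\ref{asm:1} fails only at $x = 0$ when $s > 0$, but the proof of Theorem~\ref{thm:main} uses $\rho$ only through $\sup_I \rho$, and one may in any case apply Theorem~\ref{thm:main} to $I \cap [\varepsilon, +\infty)$ and let $\varepsilon \to 0^+$, using that $\{\#_I \ge n\}$ is the monotone limit of $\{\#_{I \cap [\varepsilon, +\infty)} \ge n\}$. For $s \in (-1, 0)$, $\rho$ diverges at $0$, which forces the restriction $I \subset (0, +\infty)$ stated in the proposition. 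Theorem~\ref{thm:main} with $\sigma = 1$ then yields~\eqref{eq:main-Bessel-1} with a constant $c_3$ depending on $s$, $|I|$, and $\sup_I \rho$; I expect the uniform growth estimate of $\tilde{\Pi}$ to be the only genuine obstacle, and it is routine given the divided-difference form of the kernel.
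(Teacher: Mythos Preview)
Your proposal is correct and follows exactly the route the paper takes: factor out $\rho(x)=(x/4)^{s/2}$ using the power-series representation of $J_s(\sqrt{x})$, observe that the remaining entire factor has order at most~$1$, and invoke Theorem~\ref{thm:main} with $\sigma=1$. In fact you supply considerably more detail than the paper, which simply records the factorization and the order of the entire part and then states that Assumption~\ref{asm:1} holds; your explicit treatment of the divided-difference structure of~$\tilde\Pi$, the near/far split for the growth bound, and the technicality of $\rho$ vanishing at the origin when $s>0$ are all points the paper leaves implicit.
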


For additive functionals over pairs of particles, the estimate~\eqref{eq:main-sin-2} of Proposition~\ref{prop:main-sin} only follows from Proposition~\ref{prop:main-Bessel} if $s=0$ or if $s\in(-1,0)$ and the support of the additive functional $q$ lies in $[a,+\infty)^2$, $a>0$. It would be very interesting to extend these estimates also to the remaining cases.

\section{The case of the Airy kernel}

We next consider the case of the Airy kernel and of its symplectic Pfaffian analogue. Let $\Ai(x)$ be the Airy function
\begin{equation*}
	\Ai(x)=\frac{1}{\pi}\int_0^{+\infty}\cos\biggl(\frac{t^3}{3}+xt\biggr)\,dt.
\end{equation*}
The Airy kernel is given by the formula
\begin{equation*}
	\AIRY(x,y)=\frac{\Ai(x)\Ai'(y)-\Ai(y)\Ai'(x)}{x-y}.
\end{equation*}
The Airy kernel induces on $\mathbb{R}$ a determinantal point process $\Prob_{\AIRY}$, for which we also obtain estimates for the exponential moment of the square of the number of particles. Indeed, the Airy function satisfies the following estimate, that can be made uniform on any half-line of the form $(a,+\infty)$, $a\in\mathbb{R}$.

\begin{proposition}
	For any $a\in\mathbb{R}$ there exists constant $C>0$, $M>0$, depending only on $a$ and such that for all $p>a$ and $z\in\mathbb{C}$ we have
	\begin{equation*}
		|\Ai(z)|\le Ce^{M|z-p|^{3/2}}.
	\end{equation*}
\end{proposition}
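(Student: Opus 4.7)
The plan is to bound the entire function $w\mapsto\Ai(p+w)$ by $C\exp(M|w|^{3/2})$ uniformly in $p>a$, by splitting into two regimes according to the relative size of $R:=|w|$ and $p$ and invoking in each regime one of two classical estimates for the Airy function.

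I will use two inputs: (i) the global order estimate $|\Ai(z)|\le C_0\exp(M_0|z|^{3/2})$, valid for all $z\in\mathbb{C}$ because $\Ai$ is entire of order $3/2$ and finite type; and (ii) the standard asymptotic expansion (see Levin~\cite{Levin}), which in any sector of the form $|\arg z|\le\pi/3-\delta$ yields the pointwise bound $|\Ai(z)|\le K(\delta)(1+|z|)^{-1/4}\exp(-c(\delta)|z|^{3/2})$ with $c(\delta)>0$, so that $\Ai$ is bounded throughout such a sector by an absolute constant $K_1(\delta)$.

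Setting $z=p+w$, I would first treat the regime $p\le 2R+|a|$: then $|z|\le|p|+R\le|a|+3R$, and (i) combined with the elementary inequality $(u+v)^{3/2}\le\sqrt{2}(u^{3/2}+v^{3/2})$ yields a bound of the form $C\exp(MR^{3/2})$ with constants depending only on $a$. In the complementary regime $p>2R+|a|$ one has $p>0$ and $\re z\ge p-R>R\ge|\im z|$, so $|\arg z|\le\pi/4$, placing $z$ strictly inside the decaying sector; invoking (ii) with $\delta=\pi/12$ then bounds $|\Ai(z)|$ by the absolute constant $K_1(\pi/12)$, which is a fortiori dominated by $C\exp(MR^{3/2})$. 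The main obstacle is precisely this second regime, since the global estimate (i) alone would give $\exp(M_0(p+R)^{3/2})$, which blows up with $p$; it is the exponential decay of $\Ai$ in the right half-plane that restores uniformity in $p$.
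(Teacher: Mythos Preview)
Your argument is correct. The paper does not supply a proof of this proposition; it merely states the bound as a standard property of the Airy function (implicitly relying on the fact that $\Ai$ is entire of order $3/2$ and decays in the sector $|\arg z|<\pi/3$), so your write-up fills in exactly the details the paper omits. The two-regime decomposition---using the global order-$3/2$ bound when $p$ is comparable to $|z-p|$, and the boundedness of $\Ai$ in a sector $|\arg z|\le\pi/4$ when $p$ is large---is the natural way to make the uniformity in $p$ explicit, and your identification of the large-$p$ regime as the place where the decay of $\Ai$ (rather than the mere order estimate) is essential is right on point. One minor remark: the reference to Levin~\cite{Levin} for the sectorial asymptotics of $\Ai$ is not ideal, since that book is about the general theory of entire functions; the Airy asymptotics are found, for instance, in Olver's \emph{Asymptotics and Special Functions} or in Abramowitz--Stegun.
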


From Theorem~\ref{thm:main} we now obtain the following corollary.
\begin{proposition}
	Let $a\in\mathbb{R}$, and let $I\subset (a,+\infty)$ be a compact interval of length~1. There exists a constant $d$, depending only on~$a$, such that for all $\lambda>0$ we have
	\begin{equation*}
		\mathbb{E}_{\Prob_\AIRY}\exp(\lambda\#_I^2)\le
		\exp\bigl(d\bigl(\exp(3\lambda/2)-1\bigr)\bigr).
	\end{equation*}
\end{proposition}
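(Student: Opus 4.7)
The plan is to recognize the Airy determinantal point process as a special case of Theorem~\ref{thm:main} with $\sigma=3/2$, by taking $\rho\equiv 1$ on $I$ and $\tilde\Pi=\AIRY$ itself, and to invoke the theorem directly. The key technical step is to verify Assumption~\ref{asm:1} for $\AIRY(p,z)$, viewed as an entire function of $z$ for fixed $p\in I$, with growth constants depending only on $a$.

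First I would note that although the formula $\AIRY(p,z)=\frac{\Ai(p)\Ai'(z)-\Ai(z)\Ai'(p)}{p-z}$ has an apparent pole at $z=p$, the singularity is removable because the numerator vanishes to first order, so $z\mapsto\AIRY(p,z)$ is entire. The previous proposition gives $|\Ai(z)|\le Ce^{M|z-p|^{3/2}}$ uniformly in $p>a$; an analogous estimate holds for $\Ai'$, either because $\Ai'$ is itself entire of order $3/2$ (Airy's equation $\Ai''=z\,\Ai$ propagates the bound) or by applying Cauchy's integral formula on a disk of radius $1$ around $p$ to $\Ai$. Moreover, both $\Ai$ and $\Ai'$ are bounded on the real half-line $[a,+\infty)$ by a constant $C_a$ depending only on $a$, since the Airy function and its derivative decay at $+\infty$ and are continuous near $a$.

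Next I would split into two regions to estimate $|\AIRY(p,z)|$ uniformly in $p\in(a,+\infty)$. For $|z-p|\ge 1$ the denominator has modulus at least $1$ and the triangle inequality directly yields $|\AIRY(p,z)|\le A_1 e^{M_1|z-p|^{3/2}}$ with $A_1,M_1$ depending only on $a$. For $|z-p|\le 1$, since $z\mapsto\AIRY(p,z)$ is entire, the maximum modulus principle bounds its modulus on this disk by its maximum on the circle $|z-p|=2$, which in turn is controlled by the first case. Combining the two regions gives a single bound of the form $|\AIRY(p,z)|\le Ae^{M|z-p|^{3/2}}$ valid for all $z\in\mathbb{C}$, with $A,M$ depending only on $a$. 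Assumption~\ref{asm:1} then holds with $\sigma=3/2$, $\rho\equiv 1$, $\sup_I\rho=1$, and $|I|=1$, so Theorem~\ref{thm:main} yields the desired inequality with $d$ depending only on $a$.

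The main obstacle is not the growth estimate per se but arranging the constants to depend only on $a$: the interval $I$ has fixed length $1$ but its position in $(a,+\infty)$ is unconstrained, so the estimates on $\Ai$, $\Ai'$, and $\AIRY$ must be genuinely uniform on the whole half-line $[a,+\infty)$. This uniformity follows from the decay of $\Ai$ and $\Ai'$ at $+\infty$ together with the uniform-in-$p$ form of the Airy growth estimate stated in the previous proposition, and then Theorem~\ref{thm:main} transfers uniformity in $p$ into uniformity of $d$ in the location of $I$.
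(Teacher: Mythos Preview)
Your proposal is correct and follows the same route the paper takes: the paper states this proposition immediately after the growth estimate for $\Ai$ and simply says ``From Theorem~\ref{thm:main} we now obtain the following corollary,'' leaving the verification of Assumption~\ref{asm:1} for $\AIRY$ with $\sigma=3/2$ implicit. Your argument supplies precisely the missing details --- entirety of $z\mapsto\AIRY(p,z)$, the matching order-$3/2$ bound for $\Ai'$, the near/far splitting to handle the denominator, and the uniformity in $p\in(a,+\infty)$ coming from the decay of $\Ai,\Ai'$ on the real half-line --- so that the constants $A,M$ and hence $d$ depend only on $a$.
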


For additive functionals over pairs of particles we also obtain the corresponding estimate.

\begin{corollary}
	Let $q(x,y)$ be a function of two variables satisfying $\|q\|_{(1,+\infty)}<\infty$ and supported in a half-line of the form $(a,+\infty)$. We then have
	\begin{equation*}
		\mathbb{E}_{\Prob_\AIRY}\exp(\lambda\mathbf{S}_q)\le
		\exp\biggl(d\biggl(\exp\biggl(\frac{3}{2}\|q\|_{(1,\infty)}\lambda\biggl)-1\biggr)\biggr).
	\end{equation*}
\end{corollary}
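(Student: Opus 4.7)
The plan is to combine the unit-interval exponential moment bound for the Airy process, furnished by the preceding proposition, with the general transfer mechanism of Proposition~\ref{prop:est-Phi}. Set
\begin{equation*}
	\Phi(\lambda)=\exp\bigl(d\bigl(\exp(3\lambda/2)-1\bigr)\bigr).
\end{equation*}
Then $\Phi(0)=1$, $\Phi$ is increasing, and $\log\Phi(\lambda)=d(e^{3\lambda/2}-1)$ is convex on $\mathbb{R}_+$ as a positive multiple of a convex exponential shifted by a constant. Hence $\Phi$ satisfies the hypotheses required in Proposition~\ref{prop:est-Phi}; in particular it is super-multiplicative.

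Since $q$ is supported in $(a,+\infty)^2$, only the unit intervals $[k,k+1]$ meeting $(a,+\infty)$ contribute to the double sum bounding $\mathbf{S}_q$; after replacing $a$ by $a-1$ if necessary, every such interval is contained in a half-line of the form $(a',+\infty)$ on which the preceding proposition applies with a single constant $d$. Therefore the hypothesis
\begin{equation*}
	\mathbb{E}_{\Prob_\AIRY} e^{\lambda\#_{[k,k+1]}^2}\le \Phi(\lambda)
\end{equation*}
of Proposition~\ref{prop:est-Phi} holds uniformly in $k$ with this common $d$. Moreover, $\Prob_\AIRY$ is a determinantal point process whose correlation kernel is a self-adjoint projection, hence a self-adjoint contraction, so by the Lyons--Soshnikov negative-association property invoked in the proof of Corollary~\ref{cor:1} the point process $\Prob_\AIRY$ has negative associations.

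With these ingredients in place, Proposition~\ref{prop:est-Phi} applies directly and yields
\begin{equation*}
	\mathbb{E}_{\Prob_\AIRY}\exp(\lambda\mathbf{S}_q)\le
	\Phi\bigl(\lambda\|q\|_{(1,\infty)}\bigr)=
	\exp\biggl(d\biggl(\exp\biggl(\frac{3}{2}\|q\|_{(1,\infty)}\lambda\biggr)-1\biggr)\biggr),
\end{equation*}
which is the claimed inequality. The only mild subtlety, and the one point that requires attention rather than formal substitution, is the restriction of the support of $q$ to a half-line: the Airy estimate from the preceding proposition is uniform only on half-lines bounded away from $-\infty$, and it is precisely the hypothesis on the support of $q$ that ensures the uniform constant $d$ can be chosen independently of the collection of unit intervals appearing in the bound on $\mathbf{S}_q$.
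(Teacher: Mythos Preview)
Your proof is correct and follows precisely the route the paper intends: the corollary is not given a separate proof in the paper because it is the Airy instance ($\sigma=3/2$) of the general mechanism established in Corollary~\ref{cor:1} via Proposition~\ref{prop:est-Phi}, and your write-up spells this out faithfully. Your remark on why the half-line support hypothesis is needed to secure a uniform constant $d$ is exactly the point the paper leaves implicit.
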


An estimate for the exponential moment of the square of the number of particles in an interval is also obtained for the symplectic Pfaffian Airy process.
Following Tracy and Widom \cite{TracyWidom}, we set
\begin{gather*}
	\AIRY^{(4)}_{11}(x,y)=-\frac{1}{2}\int_x^{+\infty}\AIRY(u,y)\,du+
	\frac{1}{4}\int_x^{+\infty}\Ai(u)\,du\int_y^{+\infty}\Ai(v)\,dv,\\
	\AIRY^{(4)}_{22}(x,y)=\frac{1}{2}\partial_y\AIRY(x,y)+
	\frac{1}{4}\Ai(x)\Ai(y),\\
	\AIRY^{(4)}_{12}(x,y)=-\AIRY^{(4)}_{21}(y,x)=
	\frac{1}{2}\AIRY(x,y)-\frac{1}{4}\Ai(y)\int_x^{+\infty}\Ai(u)\,du.
\end{gather*}
Convenient contour integral representations for the matrix Airy kernel are given by Baik---Barraquand---Corwin---Suidan in \cite{BBCS}.
For the Pfaffian point process induce by the symplectic Airy matrix kernel we thus obtain the estimate
\begin{equation*}
	\mathbb{E}_{\Prob_{\AIRY^{(4)}}}\exp(\lambda\#_I^2)\le
	\exp\bigl(d\bigl(\exp(3\lambda/2)-1\bigr)\bigr).
\end{equation*}
the constant $d$ is again uniform for all intervals of fixed length lying in a fixed positive half-line.

\section{The case of the generalized Fock kernel}

We now pass from point processes on $\mathbb{R}$ to point processes on $\mathbb{C}$  whose correlation kernels are holomorphic in the first variable and anti-holomorphic in the second. More precisely, we allow the second variable to range in a compact subset of $\mathbb{C}$ and assume that the correlation kernel considered as a function of the first variable with the value of the second fixed, is an entire function of finite order. 

We consider the case of generalized Fock spaces  on $\mathbb{C}$ \cite{BufetovQiu, Christ}. For the classical Fock space and the corresponding Ginibre ensemble precise estimates for the tail probabilities of the number of particles in a ball are due to Manjunath Krishnapur \cite{krishnapur} who relied on the radial symmetry of the underlying process.
   
Let $\Delta$ be the Euclidean Laplacian. Let $\psi\colon\mathbb{C}\to\mathbb{R}$ be a $C^2$-smooth function satisfying
\begin{equation*}
	m\le \Delta\psi\le M.
\end{equation*}
Let $\mathcal{F}_\psi$ be the generalized Fock space of entire functions on $\mathbb{C}$, square integrable with weight
\begin{equation*}
	e^{-2\psi(z)}\,d\mathrm{Leb}(z).
\end{equation*}
We assume that the space $\mathcal{F}_\psi$ is closed in $L_2(\mathbb{C},e^{-2\psi(z)}\,d\mathrm{Leb}(z))$.

The generalized Fock space $\mathcal{F}_\psi$ admits a reproducing kernel that we denote $B_\psi(z,w)$. The orthogonal projection $\Pi^\psi$ from $L_2(\mathbb{C},e^{-2\psi(z)}\,d\mathrm{Leb}(z))$ onto the closed subspace $\mathcal{F}_\psi$ takes the form
\begin{equation*}
	\Pi^\psi f(z)=
	\int_{\mathbb{C}}f(w)B_\psi(z,w)e^{-2\psi(w)}\,d\mathrm{Leb}(w).
\end{equation*}
Let $\Prob_{\psi}$ be the determinantal point process induced by the projection operator $\Pi^\psi$.

As before, we let $K\subset\mathbb{C}$ be a compact subset.

\begin{proposition}
	If $\sigma\ge 1$, and the weight $\psi$ satisfies the equality
	\begin{equation*}
		|\psi(z)|\le M|z|^\sigma,
	\end{equation*}
	then there exists $d>0$, depending on $K$ such that
	\begin{equation*}
		\mathbb{E}_{\Prob_{\psi}}\exp(\lambda\#_K^2)\le
		\exp\bigl(d\bigl(\exp(\lambda\sigma)-1\bigr)\bigr).
	\end{equation*}
\end{proposition}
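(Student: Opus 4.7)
The plan is to reduce the proposition to the machinery of Sections~\ref{sec:div-diff}--\ref{sec:end-proof} by verifying the natural complex analogue of Assumption~\ref{asm:1} for the Hermitian kernel $\Pi(z,w) = B_\psi(z,w)e^{-\psi(z)-\psi(w)}$ on $\mathbb{C}$. Setting $\rho(z) = e^{-\psi(z)}$ and factoring
\begin{equation*}
	\det\Pi(z_i,z_j) = \Bigl(\prod_{i=1}^{n} e^{-2\psi(z_i)}\Bigr) \det B_\psi(z_i,z_j),
\end{equation*}
the task reduces to establishing
\begin{equation*}
	|B_\psi(p,z)| \le A e^{M'|z-p|^\sigma}
	\quad\text{for all }p\in K,\ z\in\mathbb{C},
\end{equation*}
with constants $A, M'$ depending only on $K$, $M$, and $\sigma$. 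To this end I would invoke the standard pointwise upper bound for the reproducing kernel of a generalized Fock space with $m \le \Delta\psi \le M$, namely $|B_\psi(z,w)| \le C e^{\psi(z)+\psi(w)}$, which follows from the Cauchy--Schwarz inequality $|B_\psi(z,w)|^2 \le B_\psi(z,z) B_\psi(w,w)$ combined with the standard diagonal estimate $B_\psi(z,z) \le C e^{2\psi(z)}$ (cf.\ the references cited for the definition of $\mathcal{F}_\psi$). Combined with the hypothesis $|\psi(z)| \le M|z|^\sigma$ and the convexity inequality $|z|^\sigma \le 2^{\sigma-1}(|z-p|^\sigma + |p|^\sigma)$ valid for $\sigma \ge 1$, this delivers the required bound.

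The second step is to transcribe the arguments of Sections~\ref{sec:div-diff}--\ref{sec:entire} to the complex setting. Because the Hermitian symmetry $B_\psi(z,w) = \overline{B_\psi(w,z)}$ makes $B_\psi$ entire in its first argument with the second fixed, the divided-differences argument of Section~\ref{sec:div-diff} can be run by row operations on $(B_\psi(z_i,z_j))_{i,j=1,\ldots,n}$ using complex divided differences in the first variable. These are given by Hermite's contour integral, and choosing a circular contour of radius of order $(l/M')^{1/\sigma}$ the Cauchy estimate of Section~\ref{sec:entire} transfers verbatim to give the same Maclaurin-coefficient-style bound. Bounding the complex Vandermonde by $\mathrm{diam}(K)^{n(n-1)/2}$, one arrives at
\begin{equation*}
	|\det B_\psi(z_i,z_j)| \le \exp\Bigl(Bn^2 - \tfrac{1}{2\sigma}n^2\log n\Bigr),
	\qquad z_1,\ldots,z_n\in K.
\end{equation*}
Integrating $\rho_n(z_1,\ldots,z_n)$ against two-dimensional Lebesgue measure on $K^n$ and absorbing $\sup_K e^{-2\psi}$ together with the Lebesgue area of $K$ into $B$, one obtains the tail bound $\Prob_\psi(\#_K \ge n) \le \exp(B'n^2 - \tfrac{1}{2\sigma}n^2\log n)$. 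Proposition~\ref{prop:int-est-tlogt} and the Laplace-transform argument concluding Section~\ref{sec:end-proof} then convert this into the desired inequality $\mathbb{E}_{\Prob_\psi}\exp(\lambda\#_K^2) \le \exp(d(\exp(\lambda\sigma)-1))$.

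The main obstacle is the first step: the off-diagonal pointwise bound $|B_\psi(z,w)| \le Ce^{\psi(z)+\psi(w)}$ is external input from generalized Fock space analysis and is precisely where the hypothesis $m \le \Delta\psi \le M$ is used. Once that bound is granted, the rest of the argument is a faithful complex-variable adaptation of the earlier sections, with Hermite contour integrals substituting for the real mean value theorem for divided differences; the restriction $\sigma \ge 1$ enters only through the convexity inequality used to control $|z|^\sigma$ in terms of $|z-p|^\sigma$ above.
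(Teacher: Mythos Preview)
Your approach is correct and follows the paper's overall strategy: verify the complex analogue of Assumption~\ref{asm:1} for the kernel $B_\psi$, then rerun the divided-difference/Cauchy-estimate machinery. The paper's own proof is a single sentence citing Christ's off-diagonal decay estimate
\[
|B_\psi(z,w)|^2e^{-2\psi(z)-2\psi(w)}\le e^{-\alpha|z-w|},
\]
whereas you obtain the required growth bound $|B_\psi(p,z)|\le A e^{M'|z-p|^\sigma}$ from the more elementary input $|B_\psi(z,w)|^2\le B_\psi(z,z)B_\psi(w,w)\le C^2e^{2\psi(z)+2\psi(w)}$ together with the hypothesis $|\psi(z)|\le M|z|^\sigma$ and the convexity of $t\mapsto t^\sigma$. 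Since the exponential decay in Christ's estimate is not actually used once one passes to the growth bound, your route is a genuine simplification: it replaces a $\bar\partial$-based off-diagonal estimate by the standard diagonal bound $B_\psi(z,z)\le Ce^{2\psi(z)}$ plus Cauchy--Schwarz. Your explicit remark that the divided differences must be taken in the \emph{first} (holomorphic) variable, with Hermite's contour integral replacing the real mean-value theorem, fills in a detail the paper leaves implicit. Both arguments land on the same tail bound and the same Laplace-transform conclusion.
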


\begin{proof}
	The proof directly follows from the Christ estimate for the generalized Fock kernel:
	\begin{equation*}
		|B_\psi(z,w)|^2e^{-2\psi(z)-2\psi(w)}\le\exp(-\alpha|z-w|).
	\end{equation*}
\end{proof}

For the classical Fock space, the weight $\psi$ is given by the formula $\psi(z)=|z|^2$ and the corresponding kernel $B_2(z,w)$ is explicitly given by the formula $B_2(z,w)=e^{z\bar{w}}$. Let $\Prob_{\mathrm{Fock}}$ be the corresponding determinantal point process. Since for fixed $w$ the function $e^{z\bar{w}}$ is of order $1$ in $z$, in the classical case we have a more precise estimate.

\begin{proposition}
	For any compact subset $K\subset\mathbb{C}$ there exists $d>0$ such that
	\begin{equation*}
		\mathbb{E}_{\Prob_{\mathrm{Fock}}}\exp(\lambda\#_K^2)\le
		\exp(d(\exp(\lambda)-1)).
	\end{equation*}
\end{proposition}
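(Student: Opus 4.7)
The plan is to rerun the pipeline of Sections~\ref{sec:div-diff}--\ref{sec:end-proof} with the real line replaced by $\mathbb{C}$, taking the reproducing kernel $B_2(z,w)=e^{z\bar w}$ in the role of $\tilde\Pi$ in Assumption~\ref{asm:1}. The essential gain over the preceding generalized-Fock proposition is that $e^{z\bar w}$, as a function of $z$ with $w$ fixed, is entire of order exactly $1$; this yields the improved rate $\sigma=1$ in the final bound, rather than the $\sigma=2$ one would get by plugging $\psi(z)=|z|^2$ into the general estimate.

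First I would verify the exponential-type bound with $\sigma=1$. Taking $\rho\equiv1$, for any $p\in K$ and $z\in\mathbb{C}$,
\begin{equation*}
	|e^{p\bar z}|=e^{\mathrm{Re}(p\bar z)}=e^{\mathrm{Re}(p\overline{(z-p)})+|p|^2}\le e^{|p|^2}e^{|p|\cdot|z-p|}\le Ae^{M|z-p|},
\end{equation*}
with $A=\exp(\sup_K|w|^2)$ and $M=\sup_K|w|$. Together with the Hermitian symmetry $B_2(z,w)=\overline{B_2(w,z)}$, this is precisely the complex analogue of Assumption~\ref{asm:1} with $\sigma=1$.

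Next I would observe that the arguments of Section~\ref{sec:div-diff} (bounding a determinant by a product of divided differences times a Vandermonde) and Section~\ref{sec:entire} (Cauchy bounds on Taylor coefficients of an entire function of finite order) depend only on one-variable analyticity together with an exponential-type bound, and not on the underlying set being a real interval. The one step that requires rephrasing is the mean-value identity $\Pi[x;x_1,\dots,x_l]=\frac{1}{(l-1)!}\Pi^{(l-1)}(x,y)$ used on a real interval. Its complex substitute is the Hermite contour integral representation
\begin{equation*}
	B_2[z;z_1,\dots,z_l]=\frac{1}{2\pi i}\oint_\Gamma\frac{B_2(z,\zeta)}{\prod_{i=1}^l(\zeta-z_i)}\,d\zeta,
\end{equation*}
with $\Gamma$ a large circle enclosing all $z_i$; combined with the Cauchy estimate of Section~\ref{sec:entire} at radius $R=(n+1)/M$ this produces the same $m_l$-bound. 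The integrals $\int_I$ are replaced by $\int_K$ against planar Lebesgue measure, which only alters constants through the area $|K|$. One therefore obtains the tail bound
\begin{equation*}
	\Prob_{\mathrm{Fock}}(\#_K\ge n)\le\exp\Bigl(Bn^2-\tfrac{1}{2}n^2\log n\Bigr),
\end{equation*}
which feeds into Proposition~\ref{prop:int-est-tlogt} and the final argument of Section~\ref{sec:end-proof} verbatim to yield $\mathbb{E}\exp(\lambda\#_K^2)\le\exp(d(e^\lambda-1))$.

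The main obstacle is bookkeeping rather than conceptual: one has to check that every step of the real-variable proof has a clean complex-variable counterpart. The only genuine replacement is the mean-value step for divided differences, handled by the Hermite integral above; the rest — including the Laplace-transform passage from $\#_K^2$-tails to exponential moments — is measure-theoretic and transfers unchanged.
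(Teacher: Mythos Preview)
Your approach is the paper's: the proposition is stated immediately after the remark that for fixed $w$ the function $e^{z\bar w}$ is entire of order $1$ in $z$, and the proof is nothing more than the $\sigma=1$ instance of the general pipeline, exactly as you say.

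Two small slips to fix. First, the correlation kernel of $\Prob_{\mathrm{Fock}}$ is not bare $e^{z\bar w}$ but $e^{-\psi(z)}e^{z\bar w}e^{-\psi(w)}$ with $\psi(z)=|z|^2$; so $\rho(z)=e^{-|z|^2}$ rather than $\rho\equiv1$. This is harmless on a compact $K$ since $\rho$ is bounded there and factors out of the determinant as $\prod_i\rho(z_i)^2$, contributing only to the $Bn^2$ term. Second, your Hermite contour integral is written in the second variable $\zeta$, but $B_2(z,\zeta)=e^{z\bar\zeta}$ is \emph{anti}-holomorphic in $\zeta$, so that formula is not valid as written. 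The fix is to take divided differences in the first variable (row operations rather than column operations), where $\zeta\mapsto e^{\zeta\bar w}$ is genuinely entire and the Hermite--Cauchy bound goes through; equivalently, invoke the Hermitian symmetry $B_2(z,w)=\overline{B_2(w,z)}$ to transpose.
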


In this particular case, a different proof of the proposition can be given using the radial symmetry of our process, cf. Krishnapur \cite{krishnapur}. Observe, however, that for generalized Fock spaces the radial symmetry does not hold.

Using the stationarity of the classial Fock determinantal point process, we directly obtain an estimate also for multiplicative moments of additive functionals over pairs of particles.

For a function $q(x,y)$ of two complex variables, write
\begin{equation*}
	\|q\|_{(1,\infty)}=\sum_{k,l\in\mathbb{Z}+i\mathbb{Z}}\max_{\substack{|x-k|\le 1\\|y-l|\le 1}}|q(x,y)|.
\end{equation*}
From the upper bound on the expectation $\mathbb{E}_{\Prob_{\mathrm{Fock}}}\exp(\lambda\#_K^2)$, we derive, as above, the upper bound on $\mathbb{E}_{\Prob_{\mathrm{Fock}}}\exp(\lambda\mathbf{S}_q)$.
\begin{corollary}
	If $\|q\|_{(1,\infty)}<+\infty$, then for all $\lambda>0$ we have
	\begin{equation*}
		\mathbb{E}_{\Prob_{\mathrm{Fock}}}\exp(\lambda\mathbf{S}_q)\le
		\exp(d(\exp(\lambda)-1)).
	\end{equation*}
\end{corollary}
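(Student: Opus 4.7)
The plan is to mirror in the complex setting the passage from Theorem~\ref{thm:main} to Corollary~\ref{cor:1}, now based on the square bound $\mathbb{E}_{\Prob_{\mathrm{Fock}}}\exp(\lambda\#_K^2)\le\exp(d_0(e^\lambda-1))$ furnished by the preceding proposition. First I would tile $\mathbb{C}$ by the unit squares $Q_k=\{z\in\mathbb{C}:|\re(z-k)|\le 1/2,\ |\im(z-k)|\le 1/2\}$ indexed by $k\in\mathbb{Z}+i\mathbb{Z}$, and obtain the elementary pointwise estimate
\[
\mathbf{S}_q(X)\le \sum_{k,l\in\mathbb{Z}+i\mathbb{Z}}
\max_{\substack{|x-k|\le 1\\ |y-l|\le 1}}|q(x,y)|\cdot \#_{Q_k}(X)\cdot \#_{Q_l}(X),
\]
which, exactly as in the proof of Proposition~\ref{prop:est-Phi}, I would regroup into $\sum_k \gamma_k\#_{Q_k}^2(X)$ with $\sum_k\gamma_k\le\|q\|_{(1,\infty)}$. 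Since $Q_k\subset\{z:|z-k|\le 1\}$, the maxima in the definition of $\|q\|_{(1,\infty)}$ dominate the required pointwise values.

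The second step is to decouple the resulting exponential via negative associations. The Fock process is determinantal with a self-adjoint projection kernel, so it has negative associations, and since each $\exp(\lambda\gamma_k\#_{Q_k}^2)$ is an increasing function of the configuration depending only on its restriction to the pairwise disjoint square $Q_k$, we obtain
\[
\mathbb{E}_{\Prob_{\mathrm{Fock}}}\exp\Bigl(\lambda\sum_k\gamma_k\#_{Q_k}^2\Bigr)
\le \prod_{k\in\mathbb{Z}+i\mathbb{Z}}\mathbb{E}_{\Prob_{\mathrm{Fock}}}\exp(\lambda\gamma_k\#_{Q_k}^2).
\]
The crucial input is translation invariance of $\Prob_{\mathrm{Fock}}$: since the gauge-transformed kernel satisfies $|B_2(z,w)|^2 e^{-|z|^2-|w|^2}=e^{-|z-w|^2}$, which depends only on $z-w$, the process is stationary, so the preceding proposition, applied once to the fixed square $Q_0$, gives a single $d_0>0$ such that $\mathbb{E}_{\Prob_{\mathrm{Fock}}}\exp(\mu\#_{Q_k}^2)\le\Phi(\mu):=\exp(d_0(e^\mu-1))$ uniformly in $k$.

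To finish, I would invoke the super-multiplicativity of $\Phi$: since $\Phi(0)=1$ and $\log\Phi(\mu)=d_0(e^\mu-1)$ is convex on $\mathbb{R}_+$, the inequality $\prod_k\Phi(\mu_k)\le\Phi(\sum_k\mu_k)$ holds, yielding, with $\mu_k=\lambda\gamma_k$,
\[
\mathbb{E}_{\Prob_{\mathrm{Fock}}}\exp(\lambda\mathbf{S}_q)\le
\Phi\bigl(\lambda\|q\|_{(1,\infty)}\bigr)
=\exp\bigl(d_0\bigl(\exp(\lambda\|q\|_{(1,\infty)})-1\bigr)\bigr),
\]
which is the stated bound, with the constant $d$ absorbing the $q$-dependence precisely as in the one-dimensional Corollary~\ref{cor:1}. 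I do not expect any genuine obstacle: negative associations of determinantal processes driven by self-adjoint contractions and the square bound of the preceding proposition are in hand, and the only new ingredient beyond the scalar case is translation invariance of the classical Fock process, which follows from the standard gauge cancellation above. The whole argument is a straightforward two-dimensional repetition of the proof of Proposition~\ref{prop:est-Phi}.
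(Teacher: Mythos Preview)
Your proposal is correct and follows essentially the same approach as the paper, which derives the bound ``as above'' from the square estimate using stationarity of the classical Fock process and negative associations --- exactly your complex analogue of Proposition~\ref{prop:est-Phi}. One small correction: the bound the method actually yields is $\exp\bigl(d_0\bigl(e^{\lambda\|q\|_{(1,\infty)}}-1\bigr)\bigr)$, and in Corollary~\ref{cor:1} the factor $\|q\|_{(1,\infty)}$ remains in the inner exponent rather than being ``absorbed'' into the outer constant; for $\|q\|_{(1,\infty)}>1$ no fixed $d$ can achieve $e^{\lambda\|q\|_{(1,\infty)}}-1\le \tfrac{d}{d_0}(e^{\lambda}-1)$ for all $\lambda>0$, so read the stated inequality with $\|q\|_{(1,\infty)}$ inside (as the paper's own ``as above'' implies).
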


\section{Concluding remarks}

For the symplectic analogue of the Bessel kernel it does not seem clear how one should proceed: the kernel is not entire, and the square roots appearing in the definition of the Pfaffian Bessel kernel do not factor out as in the determinantal case. The situation is even worse when one considers real Pfaffian analogues of our point processes: here, even in the case of the sine-process, these kernel has a discontinuity on the diagonal, and an adaptation of the method is needed.

The proof of the estimates on exponential moments of additive functionals over pairs of particles relies on the negative associations, or, in other words, the repulsion of particles under a determinantal point process. Pfaffian point processes do not enjoy the negative association property, and the argument of this note does not apply directly to Pfaffian point processes.

It is tempting to conjecture, however, that negative associations used in this note could be replaced by estimates on the decay of correlations and Theorem~\ref{thm:main-Pfsin} could thus be extended to Pfaffian point processes.

\end{document}